\newtheorem{theorem}{Theorem} 
\newtheorem{definition}[theorem]{Definition}
\newtheorem{lemma}[theorem]{Lemma}
\newtheorem{proposition}[theorem]{Proposition}
\newtheorem{remark}[theorem]{Remark}
\newtheorem{hyp}[theorem]{Hypothesis}
\tikzset{
    right angle quadrant/.code={
        \pgfmathsetmacro\quadranta{{1,1,-1,-1}[#1-1]}     
        \pgfmathsetmacro\quadrantb{{1,-1,-1,1}[#1-1]}},
    right angle quadrant=1, 
    right angle length/.code={\def\rightanglelength{#1}},   
    right angle length=2ex, 
    right angle symbol/.style n args={3}{
        insert path={
            let \p0 = ($(#1)!(#3)!(#2)$) in     
                let \p1 = ($(\p0)!\quadranta*\rightanglelength!(#3)$), 
                \p2 = ($(\p0)!\quadrantb*\rightanglelength!(#2)$) in 
                let \p3 = ($(\p1)+(\p2)-(\p0)$) in  
            (\p1) -- (\p3) -- (\p2)
        }
    }
}
\title{$H^2$-conformal approximation of Miura surfaces}
\date{}
\author{\begin{minipage}{\textwidth}\centering
		Frédéric Marazzato \\
		\small{Department of Mathematics, Louisiana State University, Baton Rouge, LA 70803, USA}\\
   \small{email: \texttt{marazzato@lsu.edu}\\}
   \end{minipage}
   }
\begin{document}

\maketitle

\begin{abstract}
The Miura ori is a very classical origami pattern used in numerous applications in Engineering.
A study of the shapes that surfaces using this pattern can assume is still lacking.
A constrained nonlinear partial differential equation (PDE) that models the possible shapes that a periodic Miura tessellation can take in the homogenization limit has been established recently and solved only in specific cases.
In this paper, the existence and uniqueness of a solution to the unconstrained PDE is proved for general Dirichlet boundary conditions.
Then a $H^2$-conforming discretization is introduced to approximate the solution of the PDE coupled to a Newton method to solve the associated discrete problem.
A convergence proof for the method is given as well as a convergence rate.
Finally, numerical experiments show the robustness of the method and that non trivial shapes can be achieved using periodic Miura tessellations.
\end{abstract}

\textit{Keywords: Origami, nonlinear elliptic equation, Kinematics of deformation.}

\textit{AMS Subject Classification: 35J66, 65N12, 65N15, 65N30 and 74A05.}

\section{Introduction}
Origami inspired structures are used for multiple engineering applications.
A classical example is solar panels for satellites \cite{hernandez2019active}.
Indeed, the panels can be folded along the crease lines into very compact structures, easily stored in a rocket, and then unfolded into very wide panels in space.
More recently, origami inspired structures have gained attention as a mean to produce materials with a negative Poisson ratio \cite{lang2009origami} and metamaterials \cite{wickeler2020novel}.
The science behind origami is also used to fold airbags for optimal deployment. \cite{lang2009origami}.

The Miura ori or Miura tessellation \cite{miura1969proposition} is a well-known type of origami-inspired tessellation that has drawn a lot of attention over the years.
Miura tessellations have often been deemed useful when unfolded into a flat plane.
But, they can also achieve many non-planar shapes when partially unfolded which has recently allowed new applications \cite{liu2015deformation,wickeler2020novel}.
However, a lot remains unknown regarding the modeling of the shapes that Miura tessellations can take.
Indeed, simulating their exact shape through mechanical modeling is computationally involved due to the large number of degrees of freedom (dofs) considered.
The foldability of origami gives rise to notoriously difficult computational problems \cite{akitaya2018rigid}.
Only a few periodic cells can be simulated as for instance in \cite{wickeler2020novel}.
In \cite{schenk2013geometry,wei2013geometric}, the authors managed to prove that the in-plane and out-of-plane Poisson ratios of Miura tessellations are equal in norm but of opposite signs.
But that does not provide a way to compute Miura tessellations.

To remedy that issue, \cite{nassar2017curvature,lebee2018fitting} introduced a homogenization process leading to a set of equations that describe the shapes that Miura tessellations can fit in the limit $\frac{r}{R} \to 0$, where $r$ is the size of the pattern and $R$ is the global size of the structure.
The resulting equations describe parametric surfaces that are no longer discrete but continuous.
The main advantage of the homogenization approach is that it greatly reduces the computational cost of simulating Miura surfaces as one does not need to take into account all the dofs stemming from each individual polygon any longer.
Using this approach, the authors managed to determine all axisymmetric Miura surfaces.
They also built an algorithm to produce some non-planar Miura tessellations but it fails in certain situations.
The homogenization process produces a nonlinear elliptic PDE, as described in \cite{lebee2018fitting}, that has remained unsolved.
We believe that solving and providing a systematic and robust method to compute Miura surfaces will allow the exploration of the possible shapes that can be created with Miura tessellations.
It might also help shed some light on determining what surfaces can Miura tessellations fit, which, to the best of our knowledge, remains unknown.

In Section \ref{sec:continuous}, under regularity assumptions on the boundary conditions and the domain of the parametric surfaces, existence and uniqueness of solutions of the equation are proved.
$\mathcal{C}^{2,\alpha}$ regularity of the parametric surfaces is also proved in the process.
In Section \ref{sec:discrete}, a $H^2$-conforming finite element method (FEM) coupled to a Newton method is introduced to approximate the solution of the elliptic equation.
Subsequently, a first order convergence rate in $H^2$-norm is proved for the FEM approximation.
In Section \ref{sec:tests}, the convergence rate is verified on an analytical solution and then several non-analytical surfaces are computed for various Dirichlet boundary conditions so as to demonstrate the versatility and robustness of the proposed method.

\section{Continuous equations}
\label{sec:continuous}
\subsection{Modeling of the Miura fold}
The Miura fold is based on the reference cell sketched in Figure \ref{fig:Miura cell}, in which all edges have unit length.
\begin{figure}
\centering
\begin{tikzpicture} [scale=1.5]
\pgfmathsetmacro{\r}{1}
\coordinate (a) at (0,0);
\coordinate (b) at (\r,0);
\coordinate (c) at (2*\r,0);
\coordinate (d) at \r*(0.5,{sqrt(2)/2});
\coordinate (e) at ($(d)+(\r,0)$);
\coordinate (f) at ($(e)+(\r,0)$);
\coordinate (g) at \r*(0.5,{-sqrt(2)/2});
\coordinate (h) at ($(g)+(\r,0)$);
\coordinate (i) at ($(h)+(\r,0)$);

\draw[-] (a) -- (b);
\draw[-] (c) -- (b);
\draw[-] (a) -- (d);
\draw[-] (e) -- (d);
\draw[-] (e) -- (f);
\draw[-] (c) -- (f);
\draw[-] (b) -- (e);
\draw[-] (c) -- (i);
\draw[-] (a) -- (g);
\draw[-] (g) -- (h);
\draw[-] (h) -- (i);
\draw[-] (b) -- (h);

\draw[-,dashed] (d) -- (b);
\draw[-,dashed] (e) -- (c);
\draw[-,dashed] (c) -- (h);
\draw[-,dashed] (g) -- (b);
\end{tikzpicture}
\caption{Miura reference cell.}
\label{fig:Miura cell}
\end{figure}
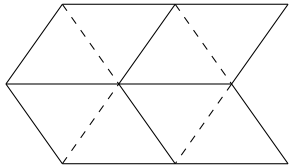
The reference cell is made of four parallelograms and can be folded along the full lines in Figure \ref{fig:Miura cell}.
Following \cite{wei2013geometric,schenk2013geometry}, we consider that the cells can also bend along the dashed lines.
However, the cells cannot stretch.
A Miura tessellation is based on the continuous juxtaposition of reference cells, dilated by a factor $r > 0$.
In the spirit of homogenization, \cite{nassar2017curvature,lebee2018fitting} have proposed a procedure to compute a surface that is the limit when $r \to 0$ of a Miura tessellation, see Figure \ref{fig:homogenization}.
This procedure leads to a constrained PDE described in \cite{lebee2018fitting}.
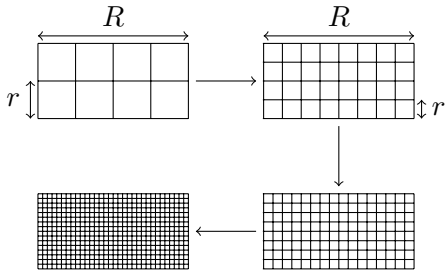
\begin{figure}
\centering
\begin{tikzpicture}
\draw [to-to] (0,1.1) -- (2,1.1);
\node[above] at (1, 1.1) {$R$};
\node[left] at (-0.1, 0.25) {$r$};
\draw [to-to] (-0.1,0) -- (-0.1,0.5);
\draw[step=0.5,black,thin] (0,0) grid (2,1);

\draw [to-to] (3,1.1) -- (5,1.1);
\node[above] at (4, 1.1) {$R$};
\node[right] at (5.1, 0.125) {$r$};
\draw [to-to] (5.1,0) -- (5.1,0.25);
\draw[step=0.25,black,thin] (2.99,0) grid (5,1);

\draw[step=0.125,black,thin] (2.99,-2) grid (5,-1);

\draw[step=0.0625,black,thin]  (0,-2) grid (2,-1);

\draw [-to](2.1,0.5) -- (2.9,0.5); 
\draw [-to](4,-0.1) -- (4,-0.9); 
\draw [-to] (2.9,-1.5) -- (2.1,-1.5); 
\end{tikzpicture}
\caption{Homogenization of a Miura tessellation}
\label{fig:homogenization}
\end{figure}

\subsection{Strong form equations}
Let $\Omega \subset \mathbb{R}^2$ be a bounded convex polygon that can be perfectly fitted by triangular meshes.
Note that, due to the convexity hypothesis, the boundary $\partial \Omega$ is Lipschitz \cite{grisvard2011elliptic} and verifies an exterior sphere condition \cite{adams2003sobolev}.
Let $\varphi : \Omega \subset \mathbb{R}^2 \rightarrow \mathbb{R}^3$ be a parametrization of the homogenized surface constructed from a Miura tessellation.
The coordinates of $\varphi$ are written as $\varphi^i$, for $i \in \{1,2,3\}$.
As proved in \cite{lebee2018fitting}, $\varphi$ is a solution of the following strong form equation:
\begin{equation}
\label{eq:min surface eq}
p(\varphi_x) \varphi_{xx} + q(\varphi_y) \varphi_{yy} = 0 \in \mathbb{R}^3,
\end{equation}
where 
\[p(\varphi_x) = \frac{1}{4 - |\varphi_{x}|^2}, \quad q(\varphi_y) = \frac{1}{|\varphi_{y}|^2}, \]
and the subscripts $x$ and $y$ stand respectively for $\partial_x$ and $\partial_y$.
It is also proved in \cite{lebee2018fitting} that solutions to \eqref{eq:min surface eq} should verify $|\varphi_y|^2 > 1$.
Note that Equation \ref{eq:min surface eq} is a simplification of the actual equation derived in \cite{lebee2018fitting}.
The equation in \cite{lebee2018fitting} also comprises equality constraints which constitute a challenge in themselves.
In this paper, we restrict ourselves to the simpler problem of studying \eqref{eq:min surface eq} with the constraint $|\varphi_y|^2 > 1$ and Dirichlet boundary conditions on all of $\partial \Omega$.

\begin{remark}
Note that, because \eqref{eq:min surface eq} is derived from zero energy deformation modes, it is not variational in the sense that it does not derive from an energy that could be interpreted as the elastic energy of the system.
This has implications in the proof of existence of solutions as variational techniques cannot be used.
\end{remark}

We impose strongly the Dirichlet boundary conditions $\varphi = \varphi_D$ on $\partial \Omega$ where $\varphi_D \in \left( \mathcal{C}^0(\partial \Omega)\right)^3$.
$\varphi_D$ is actually assumed to be more regular, as it should verify a bounded slope condition.

\begin{definition}[Bounded slope condition]
\label{def:bounded slope}
Let $u$ be a function defined on $\partial \Omega$ and 
\[ \Gamma := \{(x,y,z) \in \partial \Omega \times \mathbb{R} \ | \ z = u(x,y) \}. \]
$u$ verifies a bounded slope condition over $\partial \Omega$ with constant $K > 0$, if for every point $P \in \Gamma$, there exists two planes $P_\pm$ in $\mathbb{R}^3$, $z = \pi^\pm_P(x,y)$ passing through $P$ such that
\[ \left\{
\begin{aligned}
&\pi^-(x,y) \le u(x,y) \le \pi^+(x,y), \quad \forall (x,y) \in \partial \Omega, \\
& |\nabla \pi_P^\pm| \le K, \quad \forall P \in \Gamma.
\end{aligned}
\right. \]
Note that the last condition consists in stating that the slopes of these planes are uniformly bounded, independently of $P$, by the constant $K$.
\end{definition}

For more details, see \cite[p.~309]{gilbarg2015elliptic}.
We assume that each component $\varphi_D^i$, where $i \in \{1,2,3\}$, verifies Definition \ref{def:bounded slope}, with constants $K_i > 0$.

\begin{hyp}
\label{hyp:upper bound}
Let $K := (K_1, K_2, K_3) \in \mathbb{R}^3$, be the constants from the bounded slope condition.
It is assumed thereafter that $\partial \Omega$ and $\varphi_D$ are such that,
\begin{equation}
|K|^2 < 4.
\end{equation}
\end{hyp}
Let us write in the following $\eta := 4 - |K|^2$.

\subsection{Continuous setting}
We introduce the Hilbert space $V:=\left(H^2(\Omega)\right)^3$.
We consider the convex subset $V_D := \{\varphi \in V \ | \ \varphi=\varphi_D \text{ on } \partial \Omega\}$
as our solution space and the corresponding homogeneous space is $V_0 := \{\varphi \in V \ | \ \varphi=0  \text{ on } \partial \Omega\}$.
$V$ is equipped with the usual $\left(H^2(\Omega)\right)^3$ Sobolev norm.
Note that due to Rellich--Kondrachov theorem \cite[Theorem 9.16]{brezis}, $V \subset \left(\mathcal{C}^0(\bar{\Omega}) \right)^3$.
Let $A:V \mapsto \mathbb{R}^3$ be the operator defined for $\varphi \in V$ and $\psi \in V$ as
\begin{equation}
\label{eq:original operator}
A(\varphi) \psi := p(\varphi_x) \psi_{xx} + q(\varphi_y) \psi_{yy} \in \mathbb{R}^3.
\end{equation}
As the operator $A(\varphi)$ has no cross derivative terms, the maximum principle can be applied to each individual component.
Note that $A(\varphi)$ is not uniformly elliptic as one might have $q(\varphi_y) \to +\infty$ and is not even elliptic as one might have $p(\varphi_x) < 0$.
Therefore, let us thus define for $\varphi \in V$,
\[\bar{p}(\varphi_x) := \left\{ \begin{array}{cc} \frac1\eta & \text{if } |\varphi_{x}|^2 \ge |K|^2 \\
p(\varphi_x) & \text{otherwise} \\
 \end{array} \right.,
 \quad \bar{q}(\varphi_y) := \left\{ \begin{array}{cc} 1 & \text{if } |\varphi_{y}| \le 1 \\
 \frac1{|K|^2} & \text{if } |\varphi_{y}|^2 \ge |K|^2 \\
q(\varphi_y) & \text{otherwise} \\
 \end{array} \right. \]
$\bar{p}$ and $\bar{q}$ are Lipschitz continous with respect to their arguments and bounded.
We then define for $\varphi \in V$ and $\psi \in V$, the following operator
\begin{equation}
\label{eq:def Ah}
\bar{A}(\varphi)\psi := \bar{p}(\varphi_x) \psi_{xx} + \bar{q}(\varphi_y) \psi_{yy}.
\end{equation}
For $\varphi \in V$, $\bar{A}(\varphi)$ is uniformly elliptic.
We decide to work with the uniformly elliptic operator $\bar{A}(\varphi)$, instead of $A(\varphi)$ because it is more easily amenable to numerical approximation.
However, when $|\varphi_y| < 1$, \eqref{eq:original operator} and \eqref{eq:def Ah} do not coincide.
This is explored further, numerically, in Section \ref{sec:tests}.
 
Solving \eqref{eq:min surface eq} consists in finding $\varphi \in V_D$ such that
\begin{equation}
\label{eq:nonlinear operator eq}
\bar{A}(\varphi)\varphi = 0 \in \mathbb{R}^3.
\end{equation}
The main result of this section is Theorem \ref{th:existence} below.
The proof will follow a similar path to the proof of \cite[Theorem 12.5]{gilbarg2015elliptic}.
It consists in getting regularity from the linear equation obtained by freezing the coefficients of $\bar{A}(\varphi)$ and then using a fixed point argument.

\subsection{Existence}
Let $\varphi \in V_D$.
We first focus on solving a linear problem related to the nonlinear problem \eqref{eq:nonlinear operator eq}.
\begin{proposition}
\label{th:regularity}
There exists a unique $\psi \in V_{D}$ such that
\begin{equation}
\label{eq:linearized continuous}
\bar{A}(\varphi)\psi = 0.
\end{equation}
Moreover, there exists $\alpha \in (0,1)$ such that $\psi \in \left(\mathcal{C}^{2,\alpha}(\Omega)\right)^3$.
The solution $\psi$ also verifies the following gradient estimate,
\begin{equation}
\label{eq:gradient estimate}
\sup_\Omega |\nabla \psi^i| \le K_i, \quad \forall i \in \{1,2,3\},
\end{equation}
where $K_i > 0$ are the constants from the bounded slope condition.
\end{proposition}

\begin{proof}
$\Omega$ being a bounded convex domain, it satisfies an exterior sphere cone condition at every boundary point.
Also, $\bar{p}(\varphi)$ and $\bar{q}(\varphi)$ are Hölder continuous and $\varphi_D \in \left(\mathcal{C}^0(\partial \Omega)\right)^3$.
We can thus apply the classical result, Theorem 6.13 of \cite{gilbarg2015elliptic} for strongly elliptic linear equations to obtain the existence of $\psi \in V_D$, solution of \eqref{eq:linearized continuous}.
The $\mathcal{C}^{2,\alpha}$ regularity follows from the same theorem.
The fact that $\bar{A}(\varphi)$ is diagonal is fundamental as the cited result is proved using the maximum principle, which is not true in general for systems.
Lemma 12.6 of \cite{gilbarg2015elliptic} can finally be applied to obtain \eqref{eq:gradient estimate} because $\partial \Omega$ and $\varphi_D$ are assumed to verify a bounded slope condition with constants $(K_i)_i$.
\end{proof}

Let us now focus on the fixed point argument.
Let $T: V_D \ni \varphi \mapsto \psi(\varphi) \in V_D$ be the map that, given a $\varphi \in V_D$, associates the solution to \eqref{eq:linearized continuous}.

\begin{theorem}[Existence of a regular solution]
\label{th:existence}
There exists a solution $\varphi \in V_D$ of \eqref{eq:nonlinear operator eq}.
$\varphi$ has the following extra regularity: there exists $\alpha \in (0,1)$, $\varphi \in \left(\mathcal{C}^{2,\alpha}(\Omega)\right)^3$.
\end{theorem}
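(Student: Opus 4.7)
The plan is to produce $\varphi$ as a fixed point of the map $T$ that freezes the nonlinear coefficients: given $\tilde\varphi$, define $T(\tilde\varphi)=\psi$ where $\psi$ solves the \emph{linear} Dirichlet problem
\[
p(\tilde\varphi)\,\psi_{xx}+q(\tilde\varphi)\,\psi_{yy}=0\ \text{in }\Omega,\qquad \psi=\varphi_D\ \text{on }\partial\Omega.
\]
Because $A$ has no cross-derivative terms, this can be solved componentwise: for each $i\in\{1,2,3\}$, $\psi^i$ solves a scalar linear, uniformly elliptic, diagonal-form equation whose coefficients depend only on $\tilde\varphi$, not on $\psi$. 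Hypotheses~\ref{hyp:upper bound}--\ref{hyp:uniform ellipticity} give uniform ellipticity bounds $1\le p(\tilde\varphi)\le 4/(4-|K|^2)$ and $4/|K|^2\le q(\tilde\varphi)\le 4$ as soon as $\tilde\varphi$ lives in a set where $|\tilde\varphi_x|^2\le|K|^2$ and $|\tilde\varphi_y|\ge 1$, so all the linear theory applies cleanly.

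The first step is to show $T$ is well-defined and maps a convex closed subset into itself. I would work in $\mathcal{C}^{1,\beta}(\bar\Omega)^3$ for some $\beta\in(0,\alpha)$ and pick the closed convex set
\[
\mathcal{K}:=\bigl\{\tilde\varphi\in\mathcal{C}^{1,\beta}(\bar\Omega)^3\cap V_D\ :\ |\tilde\varphi_x^i|\le K_i\text{ and }\tilde\varphi=\varphi_D\text{ on }\partial\Omega\bigr\}.
\]
For $\tilde\varphi\in\mathcal{K}$, the coefficients $p(\tilde\varphi),q(\tilde\varphi)$ are Hölder continuous and bounded between positive constants, so Schauder theory (e.g.\ \cite[Thm.~6.14]{gilbarg2015elliptic}) yields a unique $\psi\in\mathcal{C}^{2,\alpha}(\bar\Omega)^3$ for some $\alpha\in(0,1)$, assuming also that $\varphi_D$ has been extended to a $\mathcal{C}^{2,\alpha}$ datum; the convexity of $\Omega$ and the bounded slope condition are exactly what justifies this. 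The key step is the a priori Lipschitz bound $|\psi_x^i|\le K_i$ on $\bar\Omega$: for each component $i$, the bounded slope condition furnishes affine functions $\ell^\pm$ with slope bounded by $K_i$ that sandwich $\varphi_D^i$ on $\partial\Omega$ and agree at the chosen boundary point; since $\ell^\pm$ are annihilated by the linear operator $A(\tilde\varphi)$, the comparison principle applied componentwise forces $\ell^-\le\psi^i\le\ell^+$ in $\Omega$. Sliding the boundary point gives the Haar--Rado estimate $\|\nabla\psi^i\|_{L^\infty(\Omega)}\le K_i$, hence $|\psi_x|^2\le|K|^2<4$ and $T(\mathcal{K})\subset\mathcal{K}$, provided we also check the $y$-derivative bound needed to stay in the admissible set (this follows from Hypothesis~\ref{hyp:uniform ellipticity}, imposed precisely so that $|\psi_y|\ge 1$ is available).

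Next I would verify the hypotheses of Schauder's fixed point theorem. Continuity of $T:\mathcal{K}\to\mathcal{K}$ in the $\mathcal{C}^{1,\beta}$ topology follows from the continuous dependence of the Schauder solution on its coefficients: if $\tilde\varphi_n\to\tilde\varphi$ in $\mathcal{C}^{1,\beta}$, then $p(\tilde\varphi_n),q(\tilde\varphi_n)\to p(\tilde\varphi),q(\tilde\varphi)$ uniformly (and in $\mathcal{C}^\alpha$), and the uniform $\mathcal{C}^{2,\alpha}$ estimate together with the compact embedding $\mathcal{C}^{2,\alpha}(\bar\Omega)\hookrightarrow\mathcal{C}^{1,\beta}(\bar\Omega)$ forces $T(\tilde\varphi_n)\to T(\tilde\varphi)$. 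Compactness of $T(\mathcal{K})$ in $\mathcal{C}^{1,\beta}$ comes from the same compact embedding applied to the uniform $\mathcal{C}^{2,\alpha}$ bound. Schauder's theorem then produces $\varphi\in\mathcal{K}$ with $T(\varphi)=\varphi$, which is exactly a solution of \eqref{eq:nonlinear operator eq}, and the interior $\mathcal{C}^{2,\alpha}$ regularity of $\varphi$ is inherited from the linear theory once the coefficients are known to be Hölder continuous, proving also the extra regularity part. Finally one notes that $\mathcal{C}^{2,\alpha}(\bar\Omega)^3\subset V$ so that $\varphi\in V_D$.

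\paragraph{Main obstacle.} The delicate point is the closed-ball invariance, not the fixed-point machinery itself. I expect the barrier argument that propagates the bounded slope constant $|K|$ from $\partial\Omega$ to the interior derivative bound $|\varphi_x|^2\le|K|^2<4$ to be where all the geometric hypotheses (convexity of $\Omega$, Lipschitzness of $\partial\Omega$, exterior sphere condition, bounded slope condition, and Hypothesis~\ref{hyp:upper bound}) are simultaneously used, and where the lower bound on $|\varphi_y|$ postulated in Hypothesis~\ref{hyp:uniform ellipticity} has to be handled carefully since it is an assumption on the solution itself rather than on the data.
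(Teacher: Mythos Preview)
Your strategy---freeze the coefficients, solve the linear Dirichlet problem componentwise, get the gradient bound from the bounded slope condition via comparison with affine functions, and close with Schauder's fixed point theorem---is exactly the route the paper takes (its proof follows \cite[Theorem~12.5]{gilbarg2015elliptic}). The Haar--Rado barrier argument you describe is the content of the paper's gradient estimate \eqref{eq:gradient estimate}, obtained there by citing \cite[Lemma~12.6]{gilbarg2015elliptic}.

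There is, however, a genuine gap in how you handle boundary regularity. In the paper's setting $\Omega$ is a convex \emph{polygon} and $\varphi_D$ is only assumed to be in $\mathcal{C}(\partial\Omega)^3$. Neither $\partial\Omega\in\mathcal{C}^{2,\alpha}$ nor a $\mathcal{C}^{2,\alpha}$ extension of $\varphi_D$ is available, so \cite[Theorem~6.14]{gilbarg2015elliptic} does not apply, and your assertion that ``the convexity of $\Omega$ and the bounded slope condition are exactly what justifies'' such an extension is incorrect: the bounded slope condition is a Lipschitz-type hypothesis and does not upgrade $\varphi_D$ to $\mathcal{C}^{2,\alpha}$. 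Consequently you cannot place $T(\mathcal{K})$ in $\mathcal{C}^{2,\alpha}(\bar\Omega)^3$, and the compact embedding $\mathcal{C}^{2,\alpha}(\bar\Omega)\hookrightarrow\mathcal{C}^{1,\beta}(\bar\Omega)$ that drives your compactness and continuity arguments is not at your disposal.

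The paper sidesteps this by invoking \cite[Theorem~6.13]{gilbarg2015elliptic} (which only needs the exterior sphere condition and continuous boundary data) to get existence and \emph{interior} $\mathcal{C}^{2,\alpha}$ regularity, and then runs the fixed point argument in $\left(\mathcal{C}^1(\Omega)\right)^3$ equipped with the distance-weighted interior seminorm $|\nabla\psi|_1^*=\sup_{z\in\Omega}\mathrm{dist}(z,\partial\Omega)|\nabla\psi(z)|$. The stable set $B$ is defined via the weighted H\"older bound $|\nabla\psi|_\alpha^*\le C\|\varphi_D\|_{\mathcal{C}^0}$ from \cite[Theorem~12.4]{gilbarg2015elliptic}, precompactness uses \cite[Lemma~6.33]{gilbarg2015elliptic} together with a barrier argument (\cite[Remark~3, p.~105]{gilbarg2015elliptic}) to secure equicontinuity up to $\partial\Omega$, and continuity of $T$ is obtained by a subsequence/uniqueness argument rather than by a direct continuous-dependence estimate. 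Your outline would go through verbatim under stronger smoothness assumptions on $\Omega$ and $\varphi_D$; under the paper's hypotheses you must switch to the interior-weighted framework.
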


\begin{proof}
We use the Schauder fixed point theorem, see Corollary 11.2 of \cite{gilbarg2015elliptic}. The proof consists of three steps.
\paragraph{Stability under $T$}
The aim here is to construct a
set $B \subset \left(\mathcal{C}^{1,\alpha}(\Omega)\right)^3$, stable under $T$: $T(B) \subset B$.
First, let us notice that using the maximum principle, see Theorem 3.7 of \cite{gilbarg2015elliptic}, one has
\begin{equation}
\label{eq:C0 bound}
\Vert \psi \Vert_{\mathcal{C}^0(\Omega)} = \max_\Omega |\psi| \le \max_{\partial \Omega} |\varphi_D| = \Vert \varphi_D \Vert_{\mathcal{C}^0(\partial \Omega)}.
\end{equation}
We have a $\mathcal{C}^1$ bound on $\psi$ because of \eqref{eq:gradient estimate}.
We now give a Hölder estimate of $\nabla \psi$.
Let $|\cdot|_\alpha^*$ for $\alpha \in (0,1)$ be a semi-norm such that
\[ |\nabla \psi|_\alpha^* := \sup_{\substack{z,z' \in \Omega \\ z \neq z'}} \min\left(\mathrm{dist}(z,\partial \Omega), \mathrm{dist}(z',\partial \Omega)\right)^{1+\alpha} \frac{|\nabla \psi(z) - \nabla \psi(z')|}{|z - z'|^\alpha}. \]
Using \cite[Theorem 12.4]{gilbarg2015elliptic}, one has the following bound:
\begin{equation}
\label{eq:holder bound}
|\nabla \psi|_\alpha^* \le C \Vert \psi \Vert_{\mathcal{C}^0(\Omega)} \le C \Vert \varphi_D \Vert_{\mathcal{C}^0(\partial \Omega)},
\end{equation}
where $C> 0$ is independent of $\varphi$.
We define,
\[ B= \left\{ \tilde{\psi} \in \left(\mathcal{C}^{1,\alpha}(\Omega)\right)^3 ; \Vert \tilde{\psi} \Vert_{\mathcal{C}^0} \le \Vert \varphi_D \Vert_{\mathcal{C}^0(\partial \Omega)}, \quad \sup_\Omega |\nabla \tilde{\psi}|^2 \le |K|^2,
 \quad |\nabla \tilde{\psi}|_\alpha^* \le C \Vert \varphi_D \Vert_{\mathcal{C}^0(\partial \Omega)} \right\}, \]
which is a closed convex subset of the Banach space $\left(\mathcal{C}^1(\Omega)\right)^3$ associated to the semi-norm:
\[ |\nabla \tilde{\psi}|_1^* := \sup_{z \in \Omega} \mathrm{dist}(z,\partial \Omega) |\nabla \tilde{\psi}(z)|. \]
Using \eqref{eq:C0 bound}, \eqref{eq:gradient estimate} and \eqref{eq:holder bound}, one notices that $T(B) \subset B$.
Let $\Vert \cdot \Vert^*_1$ denote the norm associated with the semi-norm $|\cdot|^*_1$ applied to the gradient.

\paragraph{Precompactness of $T(B)$}
The proof of this result is based on \cite[Lemma 6.33]{gilbarg2015elliptic} which is similar to the Ascoli--Arzel\`a theorem, see \cite[Theorem 4.25]{brezis}.
By definition, the functions of $B$ are equicontinuous at every point in $\Omega$.
Let us now prove a similar result on $\partial \Omega$.
Let $z \in \Omega$ and $z_0 \in \partial \Omega$.
Using Remark 3, p.~105 of \cite{gilbarg2015elliptic}, there exists a barrier function $w$, independent of $\varphi$, such that for $\epsilon > 0$,
\[ |\psi(z) - \varphi_D(z_0)| \le \epsilon + k_\epsilon w(z), \]
where $k_\epsilon$ is independent of $\varphi \in V_D$.
Using $w(z) \to 0$, when $z \to z_0$, one proves the equicontinuity of $T(B)$ at $z_0 \in \partial \Omega$.
Thus the functions of $T(B)$ are equicontinuous over $\bar{\Omega}$ and since $T(B)$ is a bounded equicontinuous subset of $\left(\mathcal{C}^{1,\alpha}(\Omega)\right)^3$, using \cite[Lemma 6.33]{gilbarg2015elliptic}, $T(B)$ is precompact in $\left(\mathcal{C}^1(\Omega)\right)^3$.

\paragraph{Continuity of $T$}
We prove that $T$ is continuous over $\left(\mathcal{C}^{1}(\Omega)\right)^3$ for the norm $\Vert \cdot \Vert^*_1$.
Let $\left(\varphi_n\right)_n \in B^{\mathbb{N}}$, such that $\varphi_n \mathop{\longrightarrow} \limits_{n \to +\infty} \varphi \in B$ for the norm $\Vert \cdot \Vert^*_1$.
Let $\psi := T \varphi$ and $\psi_n := T \varphi_n$, for $n \in \mathbb{N}$.
We want to prove that $\psi_n \mathop{\longrightarrow} \limits_{n \to +\infty} \psi$ for $\Vert \cdot \Vert^*_1$.
An immediate consequence of \eqref{eq:gradient estimate} is the equicontinuity of $\left(\nabla \psi_n \right)_n$, on compact subdomains of $\Omega$.
Using \cite[Corollary 6.3]{gilbarg2015elliptic}, one has in particular,
\[ d^2 \Vert \nabla^2 \psi_n \Vert_{\mathcal{C}^0(\Omega')} + d^{2+\alpha} | \nabla^2 \psi_n |^*_{\alpha, \Omega'} \le C \Vert \psi_n \Vert_{\mathcal{C}^0(\Omega)} \le C \Vert \varphi_D \Vert_{\mathcal{C}^0(\partial \Omega)}, \]
where $d \le \mathrm{dist}(\Omega', \partial \Omega)$ and $\Omega' \subset \mathring{\Omega}$.
The constant $C>0$ above does not depend on $\varphi_n$ because it depends on the $\mathcal{C}^\alpha$ norm of $\bar{p}(\varphi_n)$ and $\bar{q}(\varphi_n)$ with $(\varphi_n)_n$ bounded in the $\mathcal{C}^\alpha$ norm and $\bar{p}$ and $\bar{q}$ are Lipschitz.
Therefore, $\left(\nabla^2 \psi_n \right)_n$ is equicontinuous on compact subdomains of $\Omega$.
Therefore, a subsequence $(\psi_{u(n)})_n$, converges uniformly for all $\Omega' \subset \mathring{\Omega}$, in $\mathcal{C}^{2,\alpha}(\Omega')$ towards a function $\hat{\psi} \in \mathcal{C}^{2,\alpha}(\Omega)$ verifying $A(\varphi) \hat{\psi} = 0$ in $\Omega$.
Let us now look at the boundary conditions verified by $\hat{\psi}$.
Let $z \in \Omega$ and $z_0 \in \partial \Omega$.
Using again an argument similar to \cite[Remark 3, p.~105]{gilbarg2015elliptic}, one has
\[ |\hat{\psi}_{u(n)}(z) - \varphi_D(z_0)| \le \epsilon + k_\epsilon w(z), \]
where $w$ is independent of $n$,
and thus $\hat{\psi}(z) \to \varphi_D(z_0)$, when $n \to \infty$ and $z \to z_0$.
Thus, by uniqueness of the solutions of \eqref{eq:linearized continuous}, one has $\hat{\psi} = \psi.$
Therefore $T \varphi_{u(n)} \to \psi = T \varphi$ in $\overline{\Omega}$, when $n \to \infty$.
As $T(B)$ is precompact in $\left( \mathcal{C}^1(\Omega) \right)^3$, $T \varphi_{u(n)} \to \psi$, for $\Vert \cdot \Vert_1^*$.
The expected result has been proved only for a subsequence.
The reasoning above applies to any subsequence of $(T \psi_n)_n$ and thus $(T \psi_n)_n$ has $\psi$ as its only accumulation point.
As the sequence $(T \psi_n)_n$ is bounded in $\left( \mathcal{C}^1(\Omega) \right)^3$ for $\Vert \cdot \Vert_1^*$ and has a unique accumulation point, the entire sequence $(T \psi_n)_n$ converges to $\psi$ for $\Vert \cdot \Vert_1^*$.
Therefore, $T$ is continuous.

\paragraph{Conclusion}
Using a Schauder fixed point theorem, see \cite[Corollary 11.2]{gilbarg2015elliptic},
the map $T$ admits a fixed point $\varphi^* \in V_{D}$.
Therefore, $\varphi^*$ is a solution of \eqref{eq:nonlinear operator eq} and using Proposition \ref{th:regularity}, $\varphi^* \in \left(\mathcal{C}^{2,\alpha}(\Omega) \right)^3$, for $0 < \alpha < 1$.
\end{proof}

\subsection{Weak form equation}
Following \cite{smears}, we consider a test function $\tilde{\psi} \in V_0$ and $\varphi \in V_D$ and define the form
\begin{equation}
\label{eq:main bilinear form}
a(\varphi,\tilde{\psi}) := \int_{\Omega} \Gamma(\varphi) \bar{A}(\varphi) \varphi \cdot \Delta \tilde{\psi},
\end{equation}
where $\Delta:\mathbb{R}^3 \to \mathbb{R}^3$ is the Hodge Laplacian, which computes the Laplacian of each of the coordinates and 
\[ 0 < \Gamma(\nabla \varphi) := \frac{\bar{p}(\varphi_x) + \bar{q}(\varphi_y)}{\bar{p}(\varphi_x)^2 + \bar{q}(\varphi_x)^2} \le 4\left(1 + \frac1\eta \right). \]
As $\Gamma(\varphi) \le 4 + \frac4\eta$, $a(\varphi)$ is well defined.
Equation \eqref{eq:nonlinear operator eq} is thus reformulated into search for $\varphi \in V_D$ such that
\begin{equation}
\label{eq:weak form}
a(\varphi,\tilde{\psi}) = 0, \quad \forall \tilde{\psi} \in V_0.
\end{equation}

\begin{lemma}
Equations \eqref{eq:nonlinear operator eq} and \eqref{eq:weak form} are equivalent.
\end{lemma}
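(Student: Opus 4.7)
The strategy is a fundamental-lemma-of-the-calculus-of-variations argument, producing a cleverly chosen test function that forces the residual to vanish. The forward implication \eqref{eq:nonlinear operator eq} $\Rightarrow$ \eqref{eq:weak form} is immediate: if $A(\varphi)\varphi = 0$ a.e.\ in $\Omega$, the integrand in the definition \eqref{eq:main bilinear form} of $a$ vanishes pointwise, so $a(\varphi,\tilde{\psi}) = 0$ for every $\tilde{\psi} \in V_0$.

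For the converse, suppose $\varphi \in V_D$ satisfies $a(\varphi,\tilde{\psi})=0$ for every $\tilde{\psi} \in V_0$, and set $f := A(\varphi)\varphi$. The bounds on $p$ and $q$ furnished by Hypotheses \ref{hyp:upper bound}--\ref{hyp:uniform ellipticity}, together with $\varphi \in (H^2(\Omega))^3$, ensure that $f \in (L^2(\Omega))^3$. The plan is to construct a test function $\tilde{\psi} \in V_0$ satisfying $\Delta \tilde{\psi} = f$ component by component; substituting into the weak form then gives
\[ 0 = a(\varphi,\tilde{\psi}) = \int_\Omega f\cdot \Delta\tilde{\psi} = \int_\Omega |f|^2, \]
so that $f = 0$ a.e. Since $\Omega$ is a bounded convex polygon, Grisvard's $H^2$-regularity theorem provides, for each $i \in \{1,2,3\}$, a unique $\tilde{\psi}^i \in H^2(\Omega)\cap H^1_0(\Omega)$ solving the scalar Dirichlet Poisson problem $\Delta \tilde{\psi}^i = f^i$ in $\Omega$, $\tilde{\psi}^i = 0$ on $\partial\Omega$.

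The main obstacle is the $W^{1,\infty}(\Omega)$ regularity that $\tilde{\psi}$ must have in order to lie in $V_0$, since the $H^2$-regularity above is not quite enough in dimension two. To handle this I would first test the weak form against $(\mathcal{C}_c^\infty(\Omega))^3 \subset V_0$, yielding $\Delta f = 0$ in $\mathcal{D}'(\Omega)^3$; Weyl's lemma then upgrades $f$ to a smooth harmonic function in the interior of $\Omega$. This improved regularity of $f$, combined with Calder\'on--Zygmund estimates for the Poisson equation on the convex polygon $\Omega$ and the two-dimensional Sobolev embedding $W^{2,p} \hookrightarrow W^{1,\infty}$ for some $p>2$, lifts each $\tilde{\psi}^i$ into $W^{1,\infty}(\Omega)$, placing the constructed $\tilde{\psi}$ in $V_0$ and closing the argument.
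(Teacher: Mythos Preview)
Your overall strategy coincides with the paper's: both solve a Dirichlet Poisson problem to produce a test function whose Laplacian equals the residual $f=A(\varphi)\varphi$, then substitute back into \eqref{eq:weak form} to force $\|f\|_{L^2}=0$. You are in fact more careful than the paper, which simply asserts that the resulting identity $\int_\Omega f\cdot\Psi=0$ holds for all $\Psi\in(L^2(\Omega))^3$ without addressing whether the Poisson solution actually lies in $V_0$; you correctly flag the $W^{1,\infty}$ requirement hidden in the definition of $V_0$.

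Your proposed repair, however, does not close the gap. Testing against $(\mathcal C_c^\infty(\Omega))^3$ indeed gives $\Delta f=0$ in $\mathcal D'(\Omega)$ and hence interior smoothness of $f$, but this says nothing about the behaviour of $f$ near $\partial\Omega$: a harmonic function in $L^2(\Omega)$ need not lie in $L^p(\Omega)$ for any $p>2$, so the global Calder\'on--Zygmund step you invoke cannot be launched, and no $W^{2,p}\hookrightarrow W^{1,\infty}$ embedding is available for $\tilde\psi$. The clean way out is to bypass the $W^{1,\infty}$ constraint by density: Lemma~\ref{th:well posed} shows $\tilde\psi\mapsto a(\varphi,\tilde\psi)$ is continuous for the $H^2$-norm, and $V_0$ is dense in $(H^2(\Omega)\cap H^1_0(\Omega))^3$ (for instance, approximate $\Delta u\in L^2$ by smooth data and use the convex-polygon $W^{2,p}$ regularity for some $p>2$ to land the approximants in $W^{1,\infty}$). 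The weak form then extends to all test functions in $(H^2\cap H^1_0)^3$, on which $\Delta$ is surjective onto $(L^2)^3$, and your original argument concludes.
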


\begin{proof}
Proving that solutions of \eqref{eq:nonlinear operator eq} verify \eqref{eq:weak form} is trivial.
Let us consider $\varphi \in V_D$ solution of \eqref{eq:weak form}. Let $\tilde{\psi} \in V_0$.
Classically, there exists a unique $ \Psi \in \left(H^2(\Omega)\right)^3$ such that $\Delta \Psi = \tilde{\psi}$ in $\left(L^2(\Omega)\right)^3$ and $\Psi = 0$ on $\partial \Omega$.
Thus
\[\int_\Omega \Gamma(\varphi) \bar{A}(\varphi)\varphi \cdot \Psi = 0, \quad \forall \Psi \in \left(L^2(\Omega)\right)^3.\]
Therefore, as $\Gamma(\varphi) > 0$ a.e. in $\Omega$, $\varphi$ solves \eqref{eq:nonlinear operator eq}.
\end{proof}

To prove the uniqueness of solutions to \eqref{eq:nonlinear operator eq}, we define for $\varphi \in W_{D}$, the auxiliary bilinear form such that, for $\psi \in V_D$ and $\tilde{\psi} \in V_0$,
\begin{equation}
\label{eq:linear weak}
\mathfrak{a}(\varphi;\psi,\tilde{\psi}) := \int_{\Omega} \Gamma(\varphi) \bar{A}(\varphi)  \psi \cdot \Delta \tilde{\psi}.
\end{equation}
Using $\mathfrak{a}$, we are going to prove that, under appropriate assumptions, the map $T$ of Theorem \ref{th:existence} is contracting, which provides uniqueness of a solution of \eqref{eq:nonlinear operator eq}.
First, we need to show that $\mathfrak{a}$ is coercive.
We follow \cite{smears} and use the following lemma, first.

\begin{lemma} [Cordés condition]
There exists $\varepsilon \in (0, 1]$, for all $\varphi \in V_D$,
\[ \frac{\bar{p}(\varphi_x)^2 + \bar{q}(\varphi_y)^2}{(\bar{p}(\varphi_x) + \bar{q}(\varphi_y))^2} \le \frac1{1+\varepsilon}. \]
\end{lemma}

The proof is omitted for concision.
One can refer to \cite{smears}. 
The fact that $\bar{A}(\varphi)$ is uniformly elliptic is crucial in proving the lemma.

\begin{lemma}[Coercivity]
\label{th:coercivity}
For $\varphi \in V_D$, the bilinear form $\mathfrak{a}(\varphi)$ is coercive over $V_0 \times V_0$.
There exists $C>0$, independent of $\varphi$, for all $\Psi \in V_0$,
\[ C \Vert \Psi \Vert^2_{H^2(\Omega)} \le \mathfrak{a}(\varphi; \Psi, \Psi). \]
\end{lemma}

\begin{proof}
Following \cite{smears}, we use the Miranda--Talenti theorem which states, there exists $C > 0$, for all $\Psi \in \left(H^2(\Omega) \right)^3 \cap \left(H^1_0(\Omega) \right)^3$,
\[
\begin{aligned}
& |\Psi|_{H^2(\Omega)} \le \Vert \Delta \Psi \Vert_{L^2(\Omega)}, \\
& \Vert \Psi \Vert_{H^2(\Omega)} \le C \Vert \Delta \Psi \Vert_{L^2(\Omega)},
\end{aligned}
 \]
where $C$ is a constant depending only on the diameter of $\Omega$.
Let us now prove the coercivity of $\mathfrak{a}(\varphi)$. Using \cite[Lemma 1]{smears}, one has
\[ \begin{alignedat}{1}
\mathfrak{a}(\varphi;\Psi,\Psi) &= \Vert \Delta \Psi \Vert_{L^2} - \int_\Omega (\Delta - \Gamma(\varphi) \bar{A}(\varphi)) \Psi \cdot \Delta \Psi \\
&\ge \Vert \Delta \Psi \Vert_{L^2}^2 - \sqrt{1 - \varepsilon} |\Psi|_{H^2(\Omega)} \Vert \Delta \Psi \Vert_{L^2} \\
& \ge \frac{1 - \sqrt{1 - \varepsilon}}{C^2} \Vert \Psi \Vert_{H^2(\Omega)}^2,
\end{alignedat} \]
where $C > 0$ is the constant from the Miranda--Talenti theorem.
\end{proof}

\subsection{Uniqueness}
This section requires a little more regularity on the Dirichlet boundary condition $\varphi_D$.

\begin{hyp} [Additional regularity]
Let us assume that $\varphi_D \in H^\frac32(\partial \Omega)^3$.
\end{hyp}

\begin{lemma}
\label{th:useful}
Let $\varphi \in V_D$ and $\psi \in V_D$ verifying \eqref{eq:linear weak}.
There exists $C > 0$, independent of $\varphi$,
\[ \Vert \psi \Vert_{H^2(\Omega)} \le C \Vert \varphi_D \Vert_{H^\frac32(\Omega)}. \]
\end{lemma}

\begin{proof}
As $\varphi_D \in H^\frac32(\partial \Omega)^3$, there exists $\Phi_D \in V$ such that $\varphi_D = \Phi_D$ on $\partial \Omega$ and there exists $C>0$,
\[ \Vert \Phi_D \Vert_{H^2(\Omega)} \le C \Vert \varphi_D \Vert_{H^\frac32(\partial \Omega)}. \]
Let $\zeta := \psi - \Phi_D \in V_0$, one thus has
\[ a(\varphi; \zeta, \zeta) = -a(\varphi; \Phi_D, \zeta). \]
Using Lemma \ref{th:coercivity}, one has
\[ c \Vert \zeta \Vert_{H^2(\Omega)}^2 \le C' \Vert \Phi_D \Vert_{H^2(\Omega)} \Vert \zeta \Vert_{H^2(\Omega)}, \]
where $c > 0$ is the coercivity constant from Lemma \ref{th:coercivity} and $C' > 0$, independent of $\varphi$, comes from the fact that $\Gamma(\varphi) \bar{A}(\varphi)$ is a bounded operator.
One thus has
\[ \Vert \zeta \Vert_{H^2(\Omega)} \le \frac{C'}c \Vert \Phi_D \Vert_{H^2(\Omega)}. \]
Using the definition of $\zeta$, and the extension inequality, one gets the expected result.
\end{proof}

\begin{proposition}[Uniqueness]
Let $\delta > 0$ and $\Vert \varphi_D \Vert_{H^\frac32(\partial \Omega)} \le \delta$.
For $\delta$ small enough, Equation \eqref{eq:nonlinear operator eq} admits a unique solution.
\end{proposition}

\begin{proof}
We prove that $T$ is contracting.
Let $\varphi,\hat{\varphi} \in V_D$.
There exists $\psi,\hat{\psi} \in V_D$,
\[ \mathfrak{a}(\varphi;\psi, \tilde{\psi}) = 0 = \mathfrak{a}(\hat{\varphi};\hat{\psi}, \tilde{\psi}), \quad \forall \tilde{\psi} \in V_0. \]
As $\varphi_D \in H^\frac32(\partial \Omega)^3$, there exists $\Phi_D \in V$ such that $\varphi_D = \Phi_D$ on $\partial \Omega$. 
Let $\hat{\zeta} := \hat{\psi} - \Phi_D$ and $\zeta := \psi - \Phi_D$.
Let $\tilde{\psi} \in V_0$, one thus has
\[ \left\{ \begin{aligned}
& a(\varphi; \zeta, \tilde{\psi}) = a(\varphi; \Phi_D, \tilde{\psi}), \\
& a(\hat{\varphi}; \hat{\zeta}, \tilde{\psi}) = a(\hat{\varphi}; \Phi_D, \tilde{\psi}).
\end{aligned} \right. \]
Therefore,
\begin{align*}
\int_\Omega \left( \Gamma(\varphi) \bar{A}(\varphi) - \Gamma(\hat{\varphi}) \bar{A}(\hat{\varphi}) \right)\Phi_D \cdot \Delta \tilde{\psi} &= \mathfrak{a}(\varphi;\zeta, \tilde{\psi}) - \mathfrak{a}(\hat{\varphi};\hat{\zeta}, \tilde{\psi}), \\
& = \mathfrak{a}(\varphi;\zeta, \tilde{\psi}) - \mathfrak{a}(\varphi;\hat{\zeta}, \tilde{\psi}) + \mathfrak{a}(\varphi;\hat{\zeta}, \tilde{\psi}) - \mathfrak{a}(\hat{\varphi};\hat{\zeta}, \tilde{\psi}).
\end{align*}
Thus,
\[ \int_\Omega \left( \Gamma(\varphi) \bar{A}(\varphi) - \Gamma(\hat{\varphi}) \bar{A}(\hat{\varphi}) \right)(\Phi_D - \hat{\zeta}) \cdot \Delta \tilde{\psi} = \mathfrak{a}(\varphi;\zeta - \hat{\zeta}, \tilde{\psi}). \]

Using a Cauchy-Schwarz inequality, one has
\[ \vert \mathfrak{a}(\varphi;\zeta - \hat{\zeta}, \tilde{\psi}) \vert \le \left\Vert \left( \Gamma(\varphi) \bar{A}(\varphi) - \Gamma(\hat{\varphi}) \bar{A}(\hat{\varphi}) \right)(\Phi_D - \hat{\zeta}) \right\Vert_{L^2(\Omega)} \cdot \Vert \Delta \tilde{\psi} \Vert_{L^2(\Omega)}. \]
Because $\Gamma \bar{A}$ is Lipschitz, one has a.e. in $\Omega$,
\[ \left\vert \left( \Gamma(\varphi) \bar{A}(\varphi) - \Gamma(\hat{\varphi}) \bar{A}(\hat{\varphi}) \right)(\Phi_D - \hat{\zeta}) \right\vert \le K \vert \nabla \varphi - \nabla \hat{\varphi} \vert \left( |\Phi_{D,xx} - \hat{\zeta}_{xx}| + |\Phi_{D,yy} - \hat{\zeta}_{yy}| \right), \]
where $K > 0$ is the Lipschitz constant.
Therefore,
\[ \left\Vert \left( \Gamma(\varphi) \bar{A}(\varphi) - \Gamma(\hat{\varphi}) \bar{A}(\hat{\varphi}) \right)(\Phi_D - \hat{\zeta}) \right\Vert_{L^2(\Omega)} \le \sqrt{2} K |\varphi - \hat{\varphi}|_{W^{1,\infty}(\Omega)} \left( \Vert \Phi_D \Vert_{H^2(\Omega)} + \Vert \hat{\zeta} \Vert_{H^2(\Omega)} \right)  \]
Testing with $\tilde{\psi} = \hat{\zeta} - \zeta \in V_0$, and using Lemma \ref{th:coercivity}, with coercivity constant $c > 0$, one has
\[ c \Vert \zeta - \hat{\zeta} \Vert^2_{H^2(\Omega)} \le \sqrt{2} K \Vert \varphi - \hat{\varphi} \Vert_{W^{1,\infty}(\Omega)} \left( \Vert \Phi_D \Vert_{H^2(\Omega)} + \Vert \hat{\zeta} \Vert_{H^2(\Omega)} \right) \Vert \zeta - \hat{\zeta} \Vert_{H^2(\Omega)}. \]
Thus,
\[ c \Vert \psi-\hat{\psi} \Vert_{H^2(\Omega)} \le \sqrt2 K \Vert \varphi - \hat{\varphi} \Vert_1^* (C' + C)\delta, \]
where $C>0$ is the constant from Lemma \ref{th:useful} and $C'> 0$ is the constant from the extension $\Phi_D$.
If $\delta$ is small enough, then $T$ is contracting and \eqref{eq:nonlinear operator eq} admits a unique solution.
\end{proof}

\section{Numerical scheme}
\label{sec:discrete}
Approximate solutions to \eqref{eq:weak form} are computed using $H^2$-conformal finite elements and a Newton method.
\subsection{Discrete Setting}
Let $\left(\mathcal{T}_h \right)_h$ be a family of quasi-uniform and shape regular triangulations \cite{ern_guermond}, perfectly fitting $\Omega$.
For a cell $c \in \mathcal{T}_h$, let $h_c := \mathrm{diam}(c)$ be the diameter of c.
Then, we define $h := \max_{c \in \mathcal{T}_h} h_c$ as the mesh parameter for a given triangulation $\mathcal{T}_h$ and $\mathcal{E}_h$ as the set of its edges.
The set $\mathcal{E}_h$ is partitioned as $\mathcal{E}_h := \mathcal{E}_h^i \cup \mathcal{E}_h^b$, where for all $e \in \mathcal{E}_h^b$, $e \subset \partial \Omega$ and $\cup_{e \in \mathcal{E}^b_h} {e} = \partial \Omega$.

As \eqref{eq:weak form} is written in a subset of $\left(H^2(\Omega)\right)^3$, we resort to discretizing it using vector Bell FEM \cite{bell1969refined}, which are $H^2$-conformal.
Let
\[ V_h := \left\{ \varphi_h \in \mathcal{C}^1(\Omega)^3 \ | \ \forall c \in \mathcal{T}_h, {\varphi_h}_{|c} \in \mathbb{P}_5(c)^3, \ \forall e \in \mathcal{E}_h, \frac{\partial \varphi_h}{\partial n_e} \in \mathbb{P}_3(e)^{3 \times 2} \right\},\]
where $n_e$ is the normal to an edge $e \in \mathcal{E}_h$.
Let $V_{hD} := \{\varphi_h \in V_h | \varphi_h = \mathcal{I}_h \varphi_D \text{ on } \partial \Omega \}$, where $\mathcal{I}_h$ is the Bell interpolant \cite{brenner2008mathematical} and $V_{h0}$ is the corresponding homogeneous space.
We write the discrete problem as: search for $\varphi_h \in V_{hD}$, such that,
\begin{equation}
\label{eq:discrete equation}
a(\varphi_{h},\tilde{\psi}_{h}) = \int_\Omega \Gamma(\varphi_h ) \bar{A}(\varphi_h) \varphi_h \cdot \Delta \tilde{\psi}_{h} = 0, \quad \forall \tilde{\psi}_h \in V_{h0}.
\end{equation}
To study solutions to \eqref{eq:discrete equation}, we resort to a fixed point method.

\begin{lemma}
\label{th:unique linearized}
Given $\varphi_h \in V_{hD}$, the equation search for $\psi_h \in V_{hD}$ such that
\begin{equation}
\label{eq:linearized discrete}
\mathfrak{a}(\varphi_h;\psi_h,\tilde{\psi}_h) = 0, \quad \forall \tilde{\psi}_h \in V_{h0},
\end{equation}
admits a unique solution.
\end{lemma}

\begin{proof}
Let $\varphi_h \in V_{hD}$.
As $V_{h0} \subset V_0$ and $\mathfrak{a}(\varphi_h)$ is coercive over $V_0 \times V_0$, as proved in Lemma \ref{th:coercivity}, then it is coercive over
$V_{h0} \times V_{h0}$.
Let $\Phi_D \in V_D$ and $\Psi_h := \psi_h - \mathcal{I}_h \Phi_D$.
We are now interested in searching for $\Psi_h \in V_{h0}$,
\[ \mathfrak{a}(\varphi_h;\Psi_h,\tilde{\psi}_h) = -\mathfrak{a}(\varphi_h;\mathcal{I}_h \Phi_D,\tilde{\psi}_h), \quad \forall \tilde{\psi}_h \in V_{h0}. \]
This equation has a unique solution as $\mathfrak{a}(\varphi_h)$ is coercive.
\end{proof}

\subsection{Discrete solution}
\label{sec:fixed point}
\begin{proposition}
\label{th:discrete fixed point}
The map $T_h: V_{hD} \ni \varphi_h \mapsto \psi_h(\varphi_h) \in V_{hD}$, where $\psi_h$ is the unique solution of \eqref{eq:linearized discrete}, admits a fixed point $\varphi_h^* \in V_{hD}$ which verifies
\begin{equation*}
a(\varphi^*_h, \tilde{\psi}_h)  = \mathfrak{a}(\varphi^*_h; \varphi^*_h, \tilde{\psi}_h) = 0, \quad \forall \tilde{\psi}_h \in V_{h0},
\end{equation*}
and there exists $C > 0$, independent of $h$,
\begin{equation}
\label{eq:discrete bound}
\Vert \varphi_h^* \Vert_{H^2(\Omega)} \le  C.
\end{equation}
Moreover, letting $\delta > 0$, $\Vert \varphi_D \Vert_{H^\frac32(\partial \Omega)} \le \delta$, for $\delta$ small enough, \eqref{eq:discrete equation} admits a unique solution.
\end{proposition}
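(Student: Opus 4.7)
The plan is to apply Brouwer's fixed point theorem to the finite-dimensional map $T_h$, coupled with a uniform a priori $H^2$ bound derived from the discrete coercivity underlying Proposition~\ref{th:unique linearized}. The continuous proof of Proposition~\ref{th:fixed point} via Schauder cannot be reused as-is because the truncations $\bar p, \bar q$ kill the pointwise gradient estimates \eqref{eq:gradient estimate}; however, coercivity now suffices since we are in finite dimensions and Brouwer only needs a continuous self-map of a convex compact set.

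First, I establish a uniform a priori bound on $\psi_h := T_h(\varphi_h)$. Fix once and for all a lift $\phi_h \in V_{hD}$ of the prescribed boundary data (e.g.\ the Bell interpolant of the $\mathcal{C}^{2,\alpha}$ solution provided by Theorem~\ref{th:existence}, for which standard interpolation estimates on a shape-regular family give $\Vert \phi_h \Vert_{H^2(\Omega)} \le C$ independently of $h$). Decompose $\psi_h = \Psi_h + \phi_h$ with $\Psi_h \in V_{h0}$, test \eqref{eq:linearized discrete} against $\tilde\psi_h = \Psi_h$, and rearrange to obtain
\[ \bar{\mathfrak a}(\varphi_h; \Psi_h, \Psi_h) = -\bar{\mathfrak a}(\varphi_h; \phi_h, \Psi_h). \]
Since $\bar p$ and $\bar q$ satisfy exactly the same pointwise two-sided bounds used in Lemma~\ref{th:for coercivity}, the argument of Lemma~\ref{th:coercivity} transfers verbatim (Miranda--Talenti applies because $V_{h0} \subset H^2(\Omega)^3 \cap H^1_0(\Omega)^3$), yielding coercivity of $\bar{\mathfrak a}(\varphi_h; \cdot, \cdot)$ on $V_{h0}\times V_{h0}$ with a constant depending only on $|K|$ and $\mathrm{diam}(\Omega)$. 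The continuity estimate of Lemma~\ref{th:well posed}, recast with $\bar A$, bounds the right-hand side by $C\Vert \phi_h \Vert_{H^2}\Vert \Psi_h \Vert_{H^2}$. Combining, $\Vert \Psi_h \Vert_{H^2} \le C \Vert \phi_h \Vert_{H^2}$, so $\Vert \psi_h \Vert_{H^2(\Omega)} \le C'$ uniformly in $\varphi_h \in V_{hD}$ and in $h$.

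Define $B := \{\varphi_h \in V_{hD} : \Vert \varphi_h \Vert_{H^2(\Omega)} \le C'\}$, a closed bounded convex subset of the finite-dimensional affine space $V_{hD}$. The previous paragraph shows $T_h(B) \subset B$, and as a closed bounded subset of a finite-dimensional space $B$ is compact. Continuity of $T_h$ on $B$ follows from the Lipschitz continuity of $\bar p, \bar q$: the stiffness matrix associated with $\bar{\mathfrak a}(\varphi_h;\cdot,\cdot)$ depends continuously on $\varphi_h$, and by Proposition~\ref{th:unique linearized} it is uniformly invertible on $V_{h0}$ via the coercivity bound just established, so $\psi_h$ depends continuously on $\varphi_h$. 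Brouwer's fixed point theorem then delivers $\varphi_h^* \in B$ with $T_h(\varphi_h^*) = \varphi_h^*$.

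By construction, $\bar{\mathfrak a}(\varphi_h^*; \varphi_h^*, \tilde\psi_h) = 0$ for every $\tilde\psi_h \in V_{h0}$, which is precisely $\bar a(\varphi_h^*, \tilde\psi_h) = 0$; the bound \eqref{eq:discrete bound} is immediate from $\varphi_h^* \in B$. The main obstacle is the $h$-independence of the constant: everything hinges on the fact that the coercivity constant of $\bar{\mathfrak a}$ depends only on the universal bounds on $\bar p, \bar q$ (a consequence of the truncation) and on $\mathrm{diam}(\Omega)$, together with an $H^2$-bounded lift of the boundary data. Without the truncation in \eqref{eq:def Ah}, one could not exclude that $\bar p(\varphi_h)$ blows up along a minimizing sequence of $\varphi_h$, which would destroy the uniformity of the constants and break the argument.
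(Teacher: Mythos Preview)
Your proof is correct and follows essentially the same route as the paper: a uniform $H^2$ a priori bound on $T_h(\varphi_h)$ obtained by lifting the boundary data, splitting off the homogeneous part, and invoking the coercivity of $\bar{\mathfrak a}(\varphi_h;\cdot,\cdot)$ (inherited verbatim from Lemma~\ref{th:coercivity} thanks to the truncation), followed by Brouwer on the resulting closed ball of the finite-dimensional space $V_{hD}$. The only cosmetic difference is in the continuity step: the paper establishes that $T_h$ is Lipschitz via the explicit identity $\int_\Omega(\bar A(\varphi_h)-\bar A(\hat\varphi_h))T_h\varphi_h\cdot\Delta\tilde\psi_h = -\bar{\mathfrak a}(\hat\varphi_h;T_h\varphi_h-T_h\hat\varphi_h,\tilde\psi_h)$ together with the Lipschitz bound on $\bar A$, whereas you argue more abstractly through continuous dependence of the stiffness matrix and its uniform invertibility---both are valid, and the paper's version additionally yields uniqueness of $\varphi_h^*$ (not claimed in the statement).
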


\begin{proof}

Let us first find a stable convex domain $B \subset \left( H^{2}(\Omega) \right)^3$.
Let $\Phi_D \in V$ such that $\varphi_D = \Phi_D$ on $\partial \Omega$ and $\psi_h := \Psi_h + \mathcal{I}_h \Phi_D$, where $\Psi_h$ is the solution of 
\[ \mathfrak{a}(\varphi_h;\Psi_h,\tilde{\psi}_h) = -\mathfrak{a}(\varphi_h;\mathcal{I}_h \Phi_D,\tilde{\psi}_h), \quad \forall \tilde{\psi}_h \in V_{h0}. \]
Therefore, using the coercivity of $\mathfrak{a}(\varphi_h)$,
\begin{align*}
\Vert \Psi_h \Vert^2_{H^2(\Omega)} \le \frac{C^2}{1 - \sqrt{1 - \varepsilon}} \mathfrak{a}(\varphi_h;\Psi_h,\Psi_h) &= \frac{C^2}{1 - \sqrt{1 - \varepsilon}} |\mathfrak{a}(\varphi_h;\mathcal{I}_h \Phi_D,\Psi_h)| \\
&\le \frac{C^2}{1 - \sqrt{1 - \varepsilon}} C' \Vert \Phi_D \Vert_{H^2(\Omega)} \Vert \Psi_h \Vert_{H^2(\Omega)},
\end{align*}
where $C'>0$ is a constant independent of $\varphi_h$
Thus
\[\mathfrak{a}(\varphi_h;\psi_h,\tilde{\psi}_h) = 0, \quad \tilde{\psi}_h \in V_{h0}. \]
Therefore, one has
\[\Vert \psi_h \Vert_{H^2(\Omega)} \le \Vert \mathcal{I}_h \Phi_D \Vert_{H^2(\Omega)} + \Vert \Psi_h \Vert_{H^2(\Omega)} \\
\le C''\Vert \varphi_D \Vert_{H^\frac32(\partial \Omega)}, \]
where $C'' > 0$ is a constant independent of $h$.
Let $B$ be the ball of $V_h \subset \left(H^2(\Omega) \right)^3$ centred in $0$ with radius $C''\Vert \varphi_D \Vert_{H^\frac32(\partial \Omega)}$.
Thus $T_h(B) \subset B$.

Let us now prove the continuity of $T_h$.
We follow \cite{brenner2008mathematical} and actually prove that $T_h$ is Lipschitz.
Let $\varphi_h,\hat{\varphi}_h \in V_{hD}$ and $\tilde{\psi}_h \in V_{h0}$, one thus has
\[ \int_\Omega \left(\Gamma(\varphi_h) \bar{A}(\varphi_h) - \Gamma(\hat{\varphi}_h) \bar{A}(\hat{\varphi}_h) \right) T_h \varphi_h \cdot \Delta \tilde{\psi}_h = -\mathfrak{a}(\hat{\varphi}_h; T_h \varphi_h- T_h \hat{\varphi}_h,\tilde{\psi}_h). \]
Because $\Gamma \bar{A}$ is $K$-Lipschitz, one has
\begin{align*}
|\mathfrak{a}(\hat{\varphi}_h;T_h \varphi_h-T_h \hat{\varphi}_h,\tilde{\psi}_h)| &\le K \Vert \vert \nabla \varphi_h - \nabla \hat{\varphi}_h \vert \vert \Delta T_h \varphi_h \vert \Vert_{L^2(\Omega)} \Vert \Delta \tilde{\psi}_h \Vert_{L^2(\Omega)}, \\
& \le K \Vert \nabla \varphi_h - \nabla \hat{\varphi}_h \Vert_{L^p(\Omega)} \Vert \Delta T_h \varphi_h \Vert_{L^q(\Omega)} C \Vert \tilde{\psi}_h \Vert_{H^2(\Omega)}, \\
& \le K \Vert \nabla \varphi_h - \nabla \hat{\varphi}_h \Vert_{L^p(\Omega)} \Vert \Delta T_h \varphi_h \Vert_{L^2(\Omega)} C \Vert \tilde{\psi}_h \Vert_{H^2(\Omega)}, \\
& \le K C' \Vert \varphi_h - \hat{\varphi}_h \Vert_{H^2(\Omega)} C \Vert T_h \varphi_h \Vert_{H^2(\Omega)} C \Vert \tilde{\psi}_h \Vert_{H^2(\Omega)}, \\
\end{align*}
where $C > 0$ is the Mirand--Talenti constant and $C'>0$ is the constant of the Sobolev  $H^2(\Omega)^3 \subset W^{1,r}(\Omega)^3$, are independent of $h$ and the Hölder inequality was used with $\frac1{r} + \frac1{s} = \frac12$, $r>2$ and $s < 2$.
Finally,
\[ |\mathfrak{a}(\hat{\varphi}_h;T_h \varphi_h-T_h \hat{\varphi}_h,\tilde{\psi}_h)| \le C'' \Vert \varphi_D \Vert_{H^\frac32(\partial \Omega)} \Vert \varphi_h - \hat{\varphi}_h \Vert_{H^2(\Omega)} \Vert \tilde{\psi}_h \Vert_{H^2(\Omega)},\]
where $C''> 0$ is a constant independent of $h$.
Using $\tilde{\psi}_h := T_h \varphi_h - T_h \hat{\varphi}_h$ and the coercivity of $\mathfrak{a}(\hat{\varphi}_h)$, one has
\[ \Vert T_h \varphi_h - T_h \hat{\varphi}_h \Vert_{H^2} \le C'' \delta \Vert \varphi_h - \hat{\varphi}_h\Vert_{H^2(\Omega)}.\]
As a consequence of the Brouwer fixed point theorem \cite{brezis}, there exists $\varphi_h^* \in V_{hD}$ solution to \eqref{eq:discrete equation}.
Also, for $\delta$ small enough, the fixed point is unique because $T_h$ is contracting.

\end{proof}

\subsection{Convergence rate}
To be able to give a convergence rate, we make the following stronger regularity assumption on the boundary condition.
\begin{hyp}
Let us assume that $\varphi_D \in \left( H^\frac52(\Omega) \right)^3$.
\end{hyp}
Before giving the convergence rate, we prove the following lemma.

\begin{lemma}
\label{th:extra regularity}
The solution $\varphi \in V_D$ of \eqref{eq:nonlinear operator eq} is such that $\varphi \in H^3(\Omega)^3$ and there exists a constant $C > 0$, 
\[ \Vert \varphi \Vert_{H^3(\Omega)} \le C \Vert \varphi_D \Vert_{H^\frac52(\partial \Omega)}. \]
\end{lemma}

\begin{proof}
Let $\bar{p} := \bar{p}(\varphi)$, $\bar{q} := \bar{q}(\varphi)$, $\psi := \varphi_x$ and $\hat{\psi} := \varphi_y$.
We derive in the sense of distributions \eqref{eq:nonlinear operator eq} with respect to $x$ and $y$ and get
\[ \left\{ \begin{aligned} 
& \bar{p} \psi_{xx} + \bar{q} \psi_{yy} = - \bar{p}_x \varphi_{xx} - \bar{q}_x \varphi_{yy} =:f \in L^2(\Omega)^3,\\
& \bar{p} \hat{\psi}_{xx} + \bar{q} \hat{\psi}_{yy} = - \bar{p}_y \varphi_{xx} - \bar{q}_y \varphi_{yy} =:g \in L^2(\Omega)^3. \\
\end{aligned} \right. \]
As $\varphi_D \in H^\frac52(\partial \Omega)^3$, there exists $\Phi_D \in H^3(\Omega)^3$, $\Phi_D = \varphi_D$ on $\partial \Omega$.
Let $\hat{f} := -\bar{p}\Phi_{D,xxx} - \bar{q}\Phi_{D,xyy} \in L^2(\Omega)^3$, and $\Psi := \psi - \Phi_{D,x}$
Then one has
\[ \bar{p}\Psi_{xx} + \bar{q}\Psi_{yy} = f + \hat{f}. \]
Let $\Gamma_1 > 0$ a.e. in $\Omega$, $\Gamma_1 \ge \Gamma(\varphi)$.
Using Lemma \ref{th:coercivity}, one has
\begin{align*}
c \Vert \Psi \Vert_{H^2(\Omega)}^2 & \le \Gamma_1 (\Vert f \Vert_{L^2(\Omega)} + \Vert \hat{f} \Vert_{L^2(\Omega)} ) \Vert \Delta \Psi \Vert_{L^2(\Omega)}, \\
& \le \Gamma_1 (K_1 \Vert \varphi \Vert_{H^2(\Omega)} + K_2 \Vert \Phi_D \Vert_{H^3(\Omega)} ) C \Vert \Psi \Vert_{H^2(\Omega)},
\end{align*}
where $c > 0$ is the coercivity constant of $\mathfrak{a}(\varphi)$, $C> 0$ is the constant from the Miranda--Talenti lemma, and $K_1,K_2>0$ are constants independent of $\varphi$ and linked to the fact that $\bar{p},\bar{q}$ are Lipschitz.
Finally,
\[ c \Vert \Psi \Vert_{H^2(\Omega)} \le \Gamma_1 (K_1 C' + K_2 C'' ) C \Vert \varphi_D \Vert_{H^\frac52(\partial \Omega)}, \]
where $C'>0$ is the constant from Lemma \ref{th:useful}, and $C'' > 0$ is the constant from the extension $\Phi_D$.
Thus, one has $\Psi \in \left( H^2(\Omega) \right)^3$.
Using a similar treatment on the second equation, one has $\varphi \in \left( H^3(\Omega) \right)^3$ and the announced inequality.
\end{proof}

\begin{theorem}
\label{th:convergence rate}
Let $\delta > 0$, $\Vert \varphi_D \Vert_{H^\frac52(\partial \Omega)} \le \delta$.
For $\delta > 0$ small enough, the sequence $(\varphi_h)_h \in V_{hD}^\mathbb{N}$ of solutions of \eqref{eq:weak form} converges towards $\varphi \in V_D$, solution of \eqref{eq:nonlinear operator eq}, with the following convergence estimate,
\begin{equation}
\label{eq:convergence rate}
\Vert \varphi - \varphi_h \Vert_{H^{2}(\Omega)} \le C(\varphi) h,
\end{equation}
where $C(\varphi) >  0$ is a constant depending on $\varphi$.
\end{theorem}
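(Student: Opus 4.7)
The plan is a Strang-type nonlinear Galerkin argument. I would split the error into an interpolation error and a discrete correction,
\[ \varphi - \varphi_h = (\varphi - \mathcal{I}_h \varphi) + \eta_h, \qquad \eta_h := \mathcal{I}_h \varphi - \varphi_h \in V_{h0}, \]
and bound each piece separately. The interpolation piece is handled by classical Bell approximation estimates: since $\varphi \in (\mathcal{C}^{2,\alpha}(\Omega))^3$ by Theorem~\ref{th:existence} (and under enough regularity of $\varphi_D$ to bootstrap to $(H^3(\Omega))^3$, which is implicit in the dependence of $C(\varphi)$), one has $\Vert \varphi - \mathcal{I}_h \varphi \Vert_{H^2(\Omega)} \le C h \Vert \varphi \Vert_{H^3(\Omega)}$.

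For the discrete correction, I would use the discrete coercivity of $\bar{\mathfrak{a}}(\varphi_h;\cdot,\cdot)$ (obtained exactly as in Proposition~\ref{th:unique linearized} by repeating the Miranda--Talenti and Cord\`es arguments of Lemma~\ref{th:coercivity} with $A,p,q$ replaced by $\bar{A},\bar{p},\bar{q}$) to get $c \Vert \eta_h \Vert_{H^2}^2 \le \bar{\mathfrak{a}}(\varphi_h;\eta_h,\eta_h)$. Combining the discrete equation $\bar{\mathfrak{a}}(\varphi_h;\varphi_h,\eta_h) = 0$ with the continuous consistency identity $\int_\Omega A(\varphi)\varphi \cdot \Delta \eta_h = 0$ (valid since $\eta_h \in V_{h0} \subset V_0$), together with the fact that the gradient estimates from Proposition~\ref{th:regularity} and Hypothesis~\ref{hyp:uniform ellipticity} force $\bar{A}(\varphi) = A(\varphi)$, one arrives at
\[ \bar{\mathfrak{a}}(\varphi_h;\eta_h,\eta_h) = \int_\Omega \bar{A}(\varphi_h)(\mathcal{I}_h\varphi - \varphi) \cdot \Delta \eta_h + \int_\Omega [\bar{A}(\varphi_h) - \bar{A}(\varphi)] \varphi \cdot \Delta \eta_h. \]
The first integral is bounded by $O(h) \Vert \eta_h \Vert_{H^2}$ using the $L^\infty$ bounds on $\bar{p},\bar{q}$ and the interpolation estimate. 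For the second, the Lipschitz continuity of $\bar{p}, \bar{q}$ together with $\varphi \in (W^{2,\infty}(\Omega))^3$ gives a bound of the form $C \Vert \nabla^2 \varphi \Vert_{L^\infty} \Vert \nabla(\varphi_h - \varphi) \Vert_{L^2} \Vert \eta_h \Vert_{H^2}$.

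The hard part is the absorbing step. Writing $\Vert \nabla(\varphi_h - \varphi) \Vert_{L^2} \le \Vert \eta_h \Vert_{H^1} + \Vert \nabla(\mathcal{I}_h\varphi - \varphi) \Vert_{L^2} \le \Vert \eta_h \Vert_{H^2} + O(h^2)$, the resulting inequality $c \Vert \eta_h \Vert_{H^2} \le C(\varphi) h + C' \Vert \eta_h \Vert_{H^2}$ only closes if $C' < c$. To bypass this smallness requirement, I would recast the argument as a Brouwer fixed point for $T_h$ on the ball $B_h := \{\mu_h \in V_{hD} : \Vert \mu_h - \mathcal{I}_h \varphi \Vert_{H^2} \le C^* h\}$: for $\mu_h \in B_h$ the gradient difference $\Vert \nabla(\mu_h - \varphi) \Vert_{L^2}$ is itself $O(h)$, so the Lipschitz term feeds only into the first-order-in-$h$ part of the bound. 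Choosing $C^*$ large enough and $h$ small enough (depending on $\Vert \nabla^2 \varphi \Vert_{L^\infty}$ and the coercivity constant) yields $T_h(B_h) \subset B_h$, and uniqueness of the discrete solution (Proposition~\ref{th:discrete fixed point}) then forces $\varphi_h \in B_h$, giving $\Vert \eta_h \Vert_{H^2} \le C^* h$ and hence \eqref{eq:convergence rate} by the triangle inequality.
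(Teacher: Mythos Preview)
Your strategy coincides with the paper's: both use the coercivity of $\bar{\mathfrak{a}}(\varphi_h;\cdot,\cdot)$ coming from the Cord\`es/Miranda--Talenti argument, the Bell interpolation estimate $\Vert \varphi - \mathcal{I}_h\varphi\Vert_{H^2}\le Ch|\varphi|_{H^3}$, the consistency identity $\bar{A}(\varphi)\varphi=A(\varphi)\varphi=0$, and the Lipschitz property of $\bar{A}$ in the gradient variable. The paper applies coercivity directly to $\varphi_h-\varphi$, splits the test function as $(\varphi_h-\mathcal{I}_h\varphi)+(\mathcal{I}_h\varphi-\varphi)$, and claims both resulting pieces are bounded by $Ch\Vert\varphi_h-\varphi\Vert_{H^2}$, whence the result follows by division.

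Where you diverge from the paper is precisely in the handling of the absorbing step, and you are the more careful of the two. After Galerkin orthogonality the Lipschitz term is
\[
\int_\Omega\bigl[\bar{A}(\varphi)-\bar{A}(\varphi_h)\bigr]\varphi\cdot\Delta(\varphi_h-\mathcal{I}_h\varphi),
\]
whose natural bound involves $\Vert\nabla(\varphi-\varphi_h)\Vert_{L^2}\,\Vert\varphi_h-\mathcal{I}_h\varphi\Vert_{H^2}$, i.e.\ a term quadratic in the error. The paper's written proof replaces the test function here by $\mathcal{I}_h\varphi-\varphi$ (yielding an $O(h)$ factor outright), which does not follow from the preceding line; with the correct test function one faces exactly the closure obstruction you identify. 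Your remedy---recasting the estimate as $T_h(B_h)\subset B_h$ for the ball $B_h$ of radius $C^*h$ about $\mathcal{I}_h\varphi$ and invoking Brouwer plus discrete uniqueness---is the standard device for closing such nonlinear Galerkin arguments and is correct for $h$ small (a restriction the paper's statement implicitly permits through the constant $C(\varphi)$). You are also right to apply coercivity to $\eta_h\in V_{h0}$ rather than to $\varphi_h-\varphi$, since Miranda--Talenti requires zero trace.
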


\begin{proof}
One has
\[ \mathfrak{a}(\varphi_h;\varphi - \varphi_h, \varphi - \varphi_h) = \mathfrak{a}(\varphi_h;\varphi - \varphi_h, \varphi - \mathcal{I}_h \varphi) + \mathfrak{a}(\varphi_h;\varphi - \varphi_h, \mathcal{I}_h \varphi - \varphi_h) \]
One also has
\begin{align*}
\mathfrak{a}(\varphi_h;\varphi - \varphi_h, \mathcal{I}_h \varphi - \varphi_h) &= \mathfrak{a}(\varphi_h;\varphi, \mathcal{I}_h \varphi - \varphi_h) - 0, \\
& = \mathfrak{a}(\varphi_h;\varphi, \mathcal{I}_h \varphi - \varphi_h) - \mathfrak{a}(\varphi,\varphi, \mathcal{I}_h \varphi - \varphi_h), \\
& = \int_\Omega \left( \Gamma(\varphi_h) \bar{A}(\varphi_h) - \Gamma(\varphi) \bar{A}(\varphi) \right)\varphi \cdot \Delta (\mathcal{I}_h \varphi - \varphi_h).
\end{align*}
$\Gamma \bar{A}$ is a bounded operator.
Thus, let $M > 0$ such that, for all $(\psi,\zeta) \in V^2$, $ \Gamma(\psi) \bar{A}(\psi)\zeta \le M$ a.e. in $\Omega$.
Therefore,
\begin{multline*}
\mathfrak{a}(\varphi_h;\varphi - \varphi_h, \varphi - \varphi_h) \le M \Vert \varphi - \varphi_h \Vert_{H^2(\Omega)} \Vert \Delta (\varphi - \mathcal{I}_h \varphi) \Vert_{L^2(\Omega)} \\ + K \Vert |\nabla \varphi - \nabla \varphi_h| |\Delta \varphi| \Vert_{L^2(\Omega)} \Vert \Delta (\mathcal{I}_h \varphi - \varphi_h) \Vert_{L^2(\Omega)},
\end{multline*}
where $\Gamma \bar{A}$ is K-Lipschitz.
Let $r,s>0$ such that $\frac1{r} + \frac1{s} = \frac12$, and $r > 2$.
Let $C > 0$ be the constant from the Miranda--Talenti estimate.
Using the Hölder inequality, one has
\begin{align*}
& \mathfrak{a}(\varphi_h;\varphi - \varphi_h, \varphi - \varphi_h) \\ 
&\le M C \Vert \varphi - \varphi_h \Vert_{H^2(\Omega)} \Vert \varphi - \mathcal{I}_h \varphi \Vert_{H^2(\Omega)} + KC \Vert |\nabla \varphi - \nabla \varphi_h| |\Delta \varphi| \Vert_{L^2(\Omega)} \Vert \mathcal{I}_h \varphi - \varphi_h \Vert_{H^2(\Omega)}, \\
& \le M C \Vert \varphi - \varphi_h \Vert_{H^2(\Omega)} \Vert \varphi - \mathcal{I}_h \varphi \Vert_{H^2(\Omega)} + KC \Vert \varphi - \varphi_h \Vert_{W^{1,r}(\Omega)} \Vert \Delta \varphi \Vert_{L^{s}(\Omega)} \Vert \mathcal{I}_h \varphi - \varphi_h \Vert_{H^2(\Omega)}, \\
& \le M C \Vert \varphi - \varphi_h \Vert_{H^2(\Omega)} \Vert \varphi - \mathcal{I}_h \varphi \Vert_{H^2(\Omega)} + KC C'^2 \Vert \varphi - \varphi_h \Vert_{H^2(\Omega)} \Vert \Delta \varphi \Vert_{H^1(\Omega)} \Vert \mathcal{I}_h \varphi - \varphi_h \Vert_{H^2(\Omega)}. \\
\end{align*}
where $C' > 0$ is the largest of the two constants from the Sobolev injections $H^2(\Omega)^3 \subset W^{1,r}(\Omega)^3$ and $H^1(\Omega)^3 \subset L^{s}(\Omega)^3$.
Let $c$ the coercivity constant of $\mathfrak{a}(\varphi_h)$.
One thus has
\begin{align*}
\Vert \varphi - \varphi_h \Vert_{H^2(\Omega)} &\le \frac{MC}{c} \Vert \varphi - \mathcal{I}_h \varphi \Vert_{H^2(\Omega)} + \frac{KCC'^2}c \Vert \varphi \Vert_{H^3(\Omega)} \left( \Vert \mathcal{I}_h \varphi - \varphi \Vert_{H^2(\Omega)} + \Vert \varphi - \varphi_h \Vert_{H^2(\Omega)} \right), \\
& \le \frac{MC}{c} \Vert \varphi - \mathcal{I}_h \varphi \Vert_{H^2(\Omega)} + \frac{KCC'^2C''}c \Vert \varphi_D \Vert_{H^\frac52(\partial \Omega)} \left( \Vert \mathcal{I}_h \varphi - \varphi \Vert_{H^2(\Omega)} + \Vert \varphi - \varphi_h \Vert_{H^2(\Omega)} \right),
\end{align*}
where $C'' > 0$ is the constant from Lemma \ref{th:extra regularity}.
Let $\gamma := \frac{KCC'^2C''}c \delta < 1$, for $\delta$ small enough. 
Finally,
\[ (1 - \gamma) \Vert \varphi - \varphi_h \Vert_{H^2(\Omega)} \le \left( \frac{MC}c + \gamma \right) \Vert \mathcal{I}_h \varphi - \varphi \Vert_{H^2(\Omega)}. \]
As $\varphi \in \left(H^3(\Omega) \right)^3$, one has only the classical interpolation error \cite{brenner2008mathematical},
\[ \Vert \mathcal{I}_h \varphi  - \varphi \Vert_{H^{2}(\Omega)} \le C h \vert \varphi \vert_{H^{3}(\Omega)}. \]
Hence the announced result.
\end{proof}

\section{Numerical tests}
\label{sec:tests}
The method is implemented in the FEM software \textit{Firedrake} \cite{firedrake}.
Because it is not possible to impose strongly Dirichlet boundary conditions in \textit{Firedrake} for the Bell finite element, at the moment, we resort to a least-square penalty.
Let us define the bilinear form
\begin{equation}
b_h(\psi_h,\tilde{\psi}_h) := \bar{\eta} \sum_{e \in \mathcal{E}_h^b} h_e^{-4} \int_e \psi_h \cdot \tilde{\psi}_h,
\end{equation}
where $\bar{\eta} > 0$ is a user defined penalty coefficient and $h_e:= \mathrm{diam}(e)$ for $e \in \mathcal{E}_h$.
The corresponding right-hand side is
\begin{equation}
l_h(\tilde{\psi}_h) := \bar{\eta} \sum_{e \in \mathcal{E}_h^b} h_e^{-4} \int_e \varphi_D \cdot \tilde{\psi}_h.
\end{equation}
Several values of $\bar{\eta}$ have been tested and $\bar{\eta} = 10$ seems to work fine.
An effect would be seen if $\bar{\eta}$ was chosen too small, in which case the boundary conditions would not get imposed weakly, or too big, in which case it would affect the conditioning of the rigidity matrices.
$\bar{\eta} = 10$ is chosen for all numerical test.
To solve \eqref{eq:discrete equation}, we use a Newton method.
We thus solve at each iteration,
\[ \mathfrak{a}(\varphi_h;\psi_h,\tilde{\psi}_h) + b_h(\psi_h,\tilde{\psi}_h) = l_h(\tilde{\psi}_h), \quad \forall \tilde{\psi}_h \in V_{h0}. \]
As initial guess, we consider the solution of the Laplace equation with Dirichlet boundary conditions $\mathcal{I}_h \varphi_D$ on $\partial \Omega$:
compute $\psi_{h,0} \in V_{hD}$ solution of 
\[ \int_\Omega \nabla \psi_{h,0} \cdot \nabla \tilde{\psi}_h = 0, \quad \forall \tilde{\psi}_h \in V_{h0}. \]
The default parameters of \textit{Firedrake} are used regarding the stopping criterion for the Newton method.

Also, note that it is difficult in practice to estimate the value of the constant $|K|$ in Hypothesis \ref{hyp:upper bound}.
Instead, we define $\epsilon = 0.1$, and for $\varphi_h \in V_h$,
\[\bar{p}(\varphi_{h,x}) := \left\{ \begin{array}{cc} \frac1\epsilon & \text{if } |\varphi_{h,x}|^2 \ge 4 - \epsilon \\
p(\varphi_x) & \text{otherwise} \\
 \end{array} \right.,
 \quad \bar{q}(\varphi_{h,y}) := \left\{ \begin{array}{cc} 1 & \text{if } |\varphi_{h,y}| \le 1 \\
q(\varphi_{h,y}) & \text{otherwise}. \\
 \end{array} \right. \]
For all numerical tests, we check if the constraints above are saturated, in which case, $\varphi_h$ does not approximates a solution of Equation \eqref{eq:min surface eq} in the entire domain $\Omega$.

\subsection{Minimal surface}
The domain is a rectangle $\Omega = (0,L) \times (0,H)$, where $L = 2$ and $H = 1$.
The boundary of the domain is folded by an angle $\alpha$ along the segment $[AC]$ as sketched in Figure \ref{fig:saddle BC}.
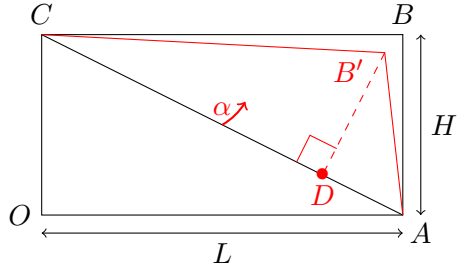
\begin{figure} [!htp]
\centering
\begin{tikzpicture}[scale=1.2]
\pgfmathsetmacro{\H}{2}
\pgfmathsetmacro{\L}{4}
\pgfmathsetmacro{\l}{0.2}

\coordinate (o) at (0,0);
\draw (o) node[left] {$O$};
\coordinate (a) at (\L,0);
\draw (\L+\l,-\l) node {$A$};
\coordinate (b) at (\L,\H);
\draw (b) node[above] {$B$};
\coordinate (c) at (0,\H);
\draw (c) node[above] {$C$};

\draw (o) -- (a) -- (b) -- (c) -- cycle;
\draw (a) -- (c);

\draw [<->,below] (0,-\l) -- (\L,-\l);
\draw (\L/2,-\l) node[below] {$L$};
\draw [<->] (\L+\l,\H) -- (\L+\l,0);
\draw (\L+\l,\H/2) node[right] {$H$};

\coordinate (bp) at (\L-\l,\H-\l);
\draw (\L-3*\l,\H-2*\l) node[red] {$B'$};
\draw[red] (c) -- (bp) -- (a);
\draw[thick,red,->] (\L/2,\H/2) arc (-60:-30:0.7cm);
\draw (\L/2,\H/2) node[above,red] {$\alpha$};

\coordinate (d) at (\L/2+\L/3.6,\H/2-\H/3.6);
\draw (d) node[red] {$\bullet$};
\draw (d) node[red,below] {$D$};
\draw[dashed,red] (bp) -- (d);
\draw [red,right angle symbol={a}{c}{bp}];
\end{tikzpicture}
\caption{Saddle shape: setup.}
\label{fig:saddle BC}
\end{figure}
Therefore, on the lines of equation $x=0$ and $y=0$, the imposed boundary conditions is $\varphi_{D1}(x,y) = \left(x,y,0 \right)^{\mathsf{T}}$,
whereas on the lines of equation $x=L$ and $x=H$, the imposed boundary condition is $\varphi_{D2}(x,y) = (1 - \frac{x}{L}) \overrightarrow{B'C} + (1 - \frac{y}{H})\overrightarrow{B'A} + \overrightarrow{OB'}$, where
\[ \overrightarrow{DB'} = BD\sin(\alpha)(0,0,1)^\mathsf{T} + \cos(\alpha)\overrightarrow{DB}. \]
Figure \ref{fig:saddle} shows the surface computed with a structured mesh of size $h=5.00\cdot 10^{-2}$ and $15,138$ dofs.
\begin{figure}[!htp]
\centering
	\subfloat{
   \includegraphics[scale=0.2]{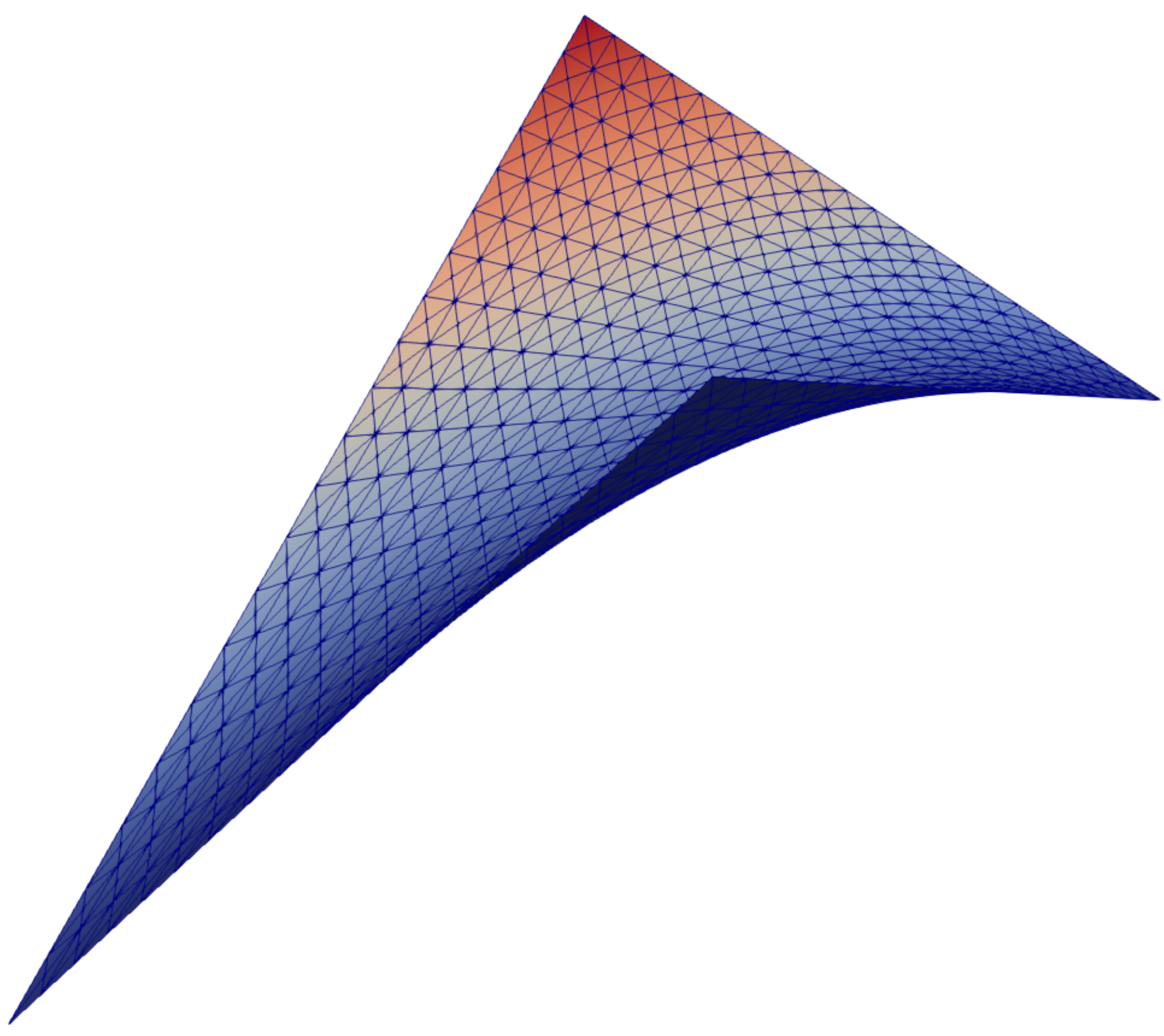}
   }
   \subfloat{
   \includegraphics[scale=0.2]{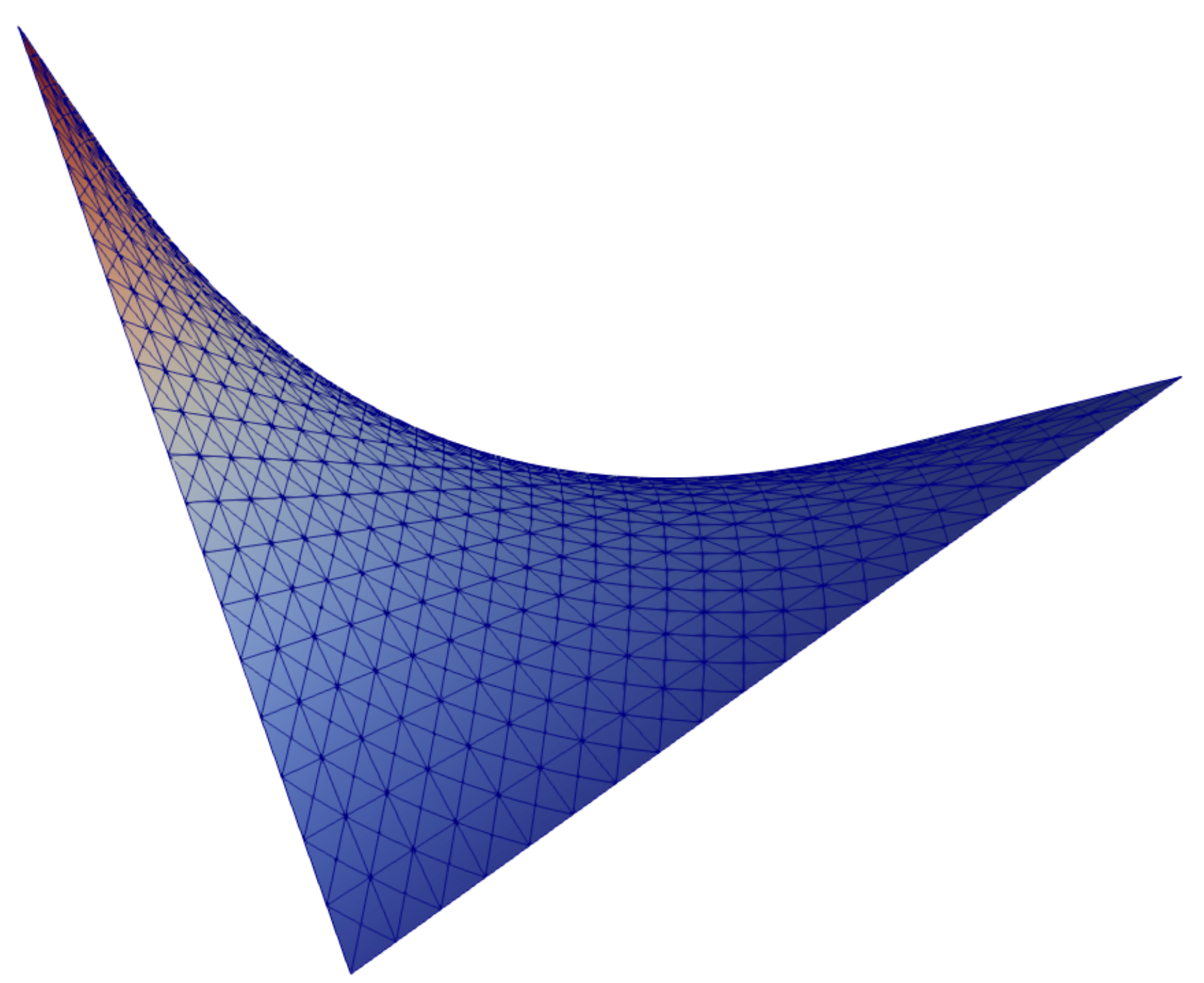}
   }
   \caption{Saddle shaped surface: resulting shape.}
   \label{fig:saddle}
\end{figure}
We note that the computation stops after the first iteration.
That suggests that the initial guess, is actually the solution and thus the solution is a minimal surface.
That is confirmed by the fact that $\max_\Omega |\Delta \varphi_h| \sim 10^{-9}$.
A closer inspection shows that actually $|\varphi_{h,xx}|\sim 10^{-9}$ and $|\varphi_{h,yy}|\sim 10^{-9}$.
One can even deduce in that case that the analytical solution is
\[\varphi(x,y) = \frac1{LH} \left( \overrightarrow{OB'}  -(L,0,0)^\mathsf{T} -(0,H,0)^\mathsf{T} \right)xy + (1,0,0)^\mathsf{T}x + (0,1,0)^\mathsf{T}y. \]
As $\varphi \in V_h$, $\varphi_h = \varphi$ on any mesh.
Also note that, here one approximates a solution of $\bar{A}(\varphi)\varphi = 0$ but not of $A(\varphi)\varphi = 0$, as $|\varphi_{h,y}| < 1$ in all of $\Omega$.
This is illustrated by Figure \ref{fig:saddle grad_y}.
\begin{figure}[!htp]
\centering
\includegraphics[scale=0.2]{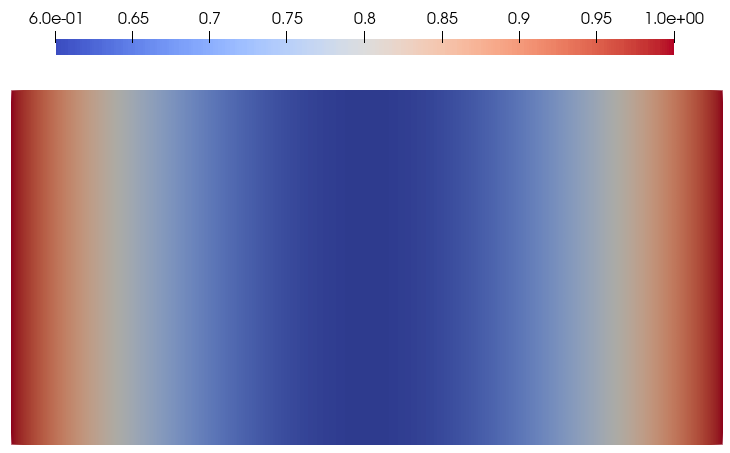}
\caption{Saddle shaped surface: $|\varphi_{h,y}|$.}
\label{fig:saddle grad_y}
\end{figure}

\subsection{Axisymmetric surface}
\label{ex:hyper}
This test case comes from \cite{lebee2018fitting}.
The reference solution is
\[\phi(x,y) = \left(\rho(x)\cos(\alpha y), \rho(x) \sin(\alpha y), z(x) \right)^\mathsf{T}, \]
where 
\[\left\{ \begin{aligned}
\rho(x) &= \sqrt{4 c_0^2 x^2 + 1}, \\
z(x) &= 2 s_0 x, 
\end{aligned} \right. \]
$\alpha = \left(1 - s_0^2 \right)^{-1/2}$, $c_0 = \cos(\frac\theta{2})$, $s_0 = \sin(\frac\theta{2})$ and $\theta \in (0, \frac{2\pi}3)$.
The domain is $\Omega = (-s_0^*,s_0^*) \times (0, \frac{2\pi}\alpha)$, where $s_0^* = \sin\left(\frac12\cos^{-1}\left(\frac1{2\cos\left(\frac{\theta}2\right)}\right)\right)$.	
Note that $\sup_\Omega |\phi_x|^2 = 2.45 < 4$.

A convergence test is performed to show that the convergence rate proved in Theorem \ref{th:convergence rate} is correct.
Let $\theta = \frac\pi{2}$.
The reference solution $\phi$ is used as Dirichlet boundary condition on $\partial \Omega$.
Table \ref{tab:convergence rate} contains the errors and estimated convergence rate.
\begin{table}[!htp]
\centering
\begin{tabular}{|c|c|c|c|c|}
\hline
$h$ & nb dofs & $H^2$-error & convergence rate & nb iterations \\ \hline
0.140 & $9,450$ & 1.815e-04 & - & 3 \\ \hline
0.0702 & $35,838$ & 2.267e-05 & 3.13 & 3 \\ \hline
0.0351 & $139,482$ & 3.006e-06 & 2.97 & 3 \\ \hline
0.0175 & $550,242$ & 3.797e-07 & 3.01 & 3 \\ \hline
\end{tabular}
\caption{Axisymmetric surface: estimated convergence rate and number of Newton iterations.}
\label{tab:convergence rate}
\end{table}
The convergence rate is estimated using the formula \[ \log\left(\frac{e_1}{e_2}\right)  \log\left(\frac{\mathrm{card}(\mathcal{T}_{h_1})}{\mathrm{card}(\mathcal{T}_{h_2})}\right)^{-1}, \]
where $e_1$ and $e_2$ are the errors in $H^2$ semi-norm.
The convergence rates presented in Table \ref{tab:convergence rate} are well above the fist order rate proved in \eqref{eq:convergence rate}. This is due to the fact that $\phi$ is far more regular than in the general case.
Indeed, $\phi \in \mathcal{C}^\infty(\Omega)^3$.
In that case, the classical interpolation result \cite{brenner2008mathematical} is $\Vert \mathcal{I}_h \phi - \phi \Vert_{H^2(\Omega)} \le C h^3 |\phi|_{H^{5}(\Omega)}$ and we recover a convergence order of $3$ as estimated in Table \ref{tab:convergence rate}.

Further computations are performed with more realistic boundary conditions.
Mirror boundary conditions are imposed on the lines of equation $y=0$ and $y=\frac{2\pi}\alpha$ of $\Omega$, which translates into the fact that the dofs on the two planes are one and the same and not doubled, but still unknown.
The Dirichlet boundary condition imposed on the lines of equations $x=-s_0^*$ and $x=s_0^*$ of are then only a circle centered around the $z$ axis, of radius $\sqrt{4c_0^2(s_0^*)^2+1}$ and contained in the planes of equations $z=\pm2s_0s_0^*$.
Figure \ref{fig:hyperboloid} shows the computed surface for $\theta = \frac{\pi}2$ and $\theta = \frac{\pi}4$.
\begin{figure}[!htp]
\centering
	\subfloat{
   \includegraphics[scale=0.08,trim={0 0 100 0},clip]{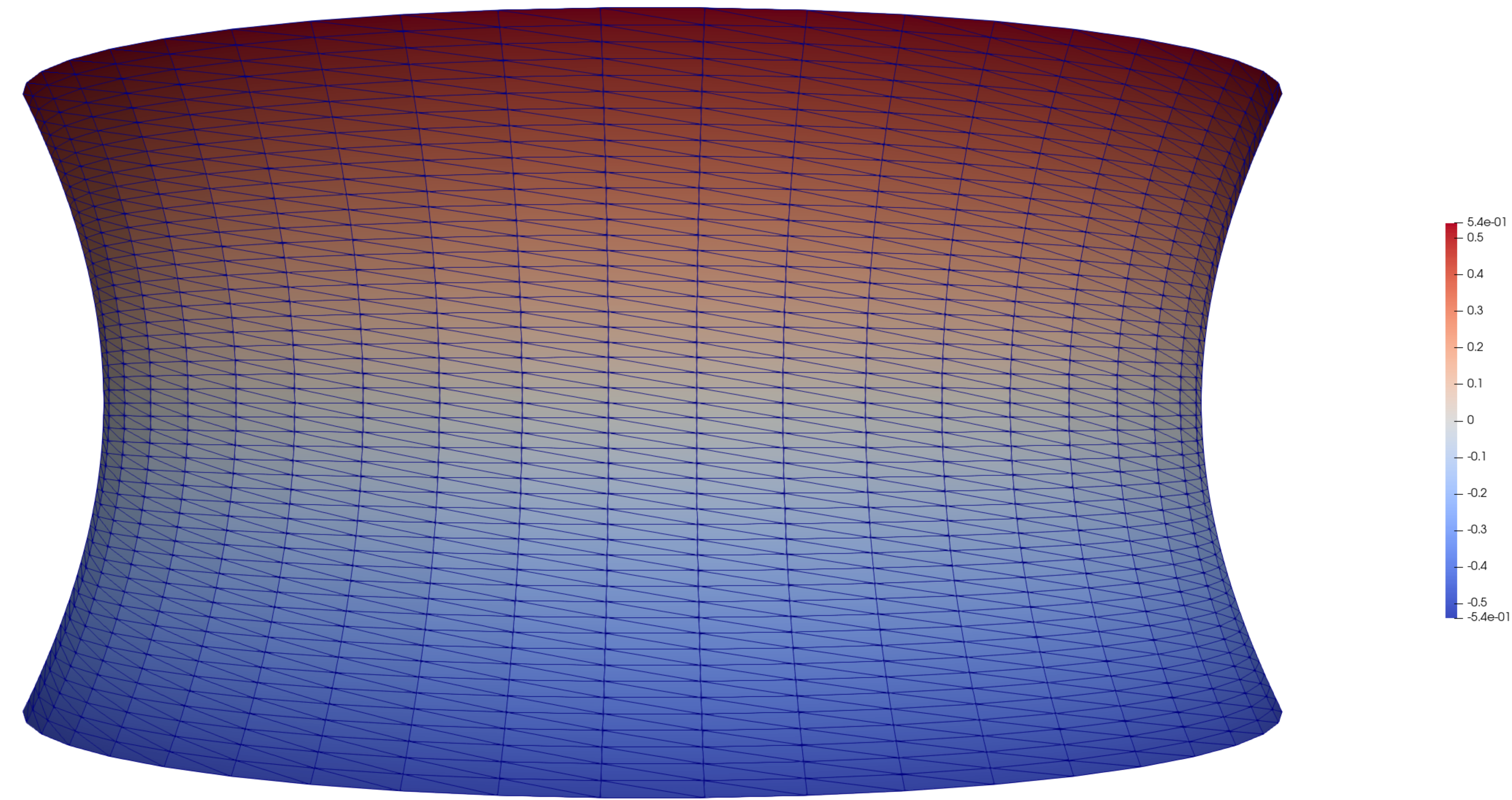}
   }
   \subfloat{
   \includegraphics[scale=0.08,trim={0 0 150 0},clip]{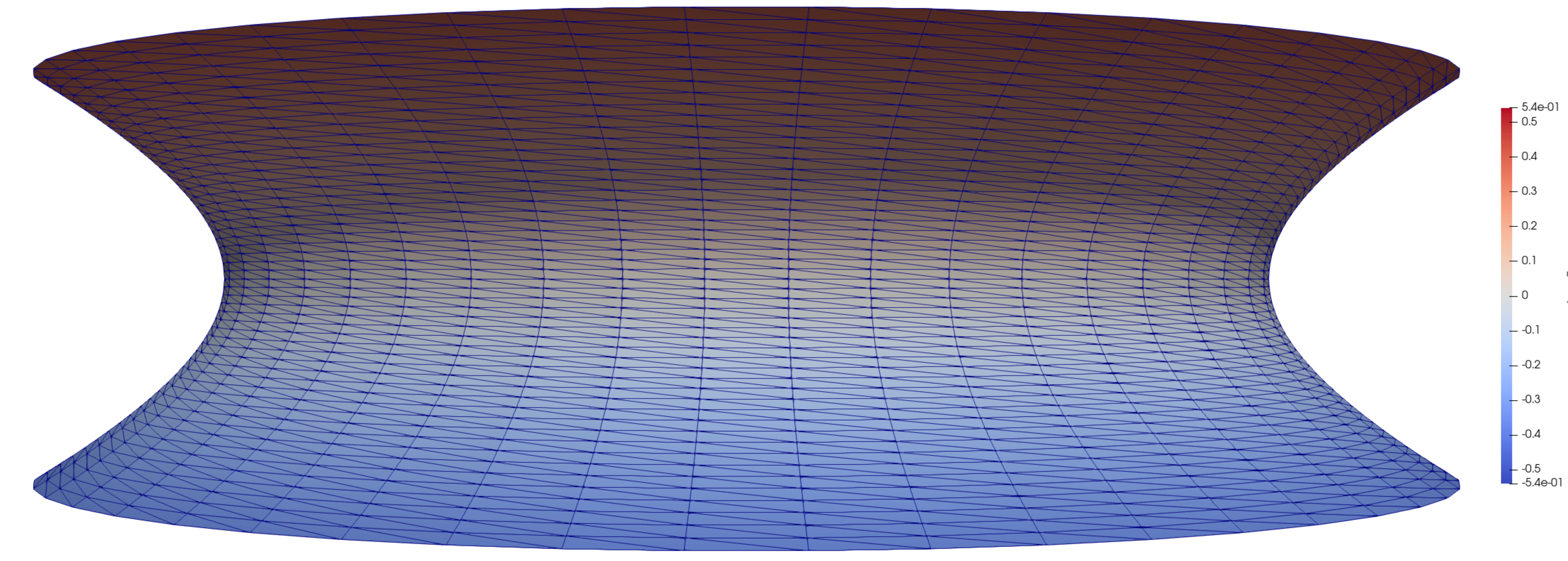}
   }
   \caption{Axisymmetric surface: Hyperboloids for $\theta = \frac{\pi}2$ (left) and for $\theta = \frac{\pi}4$ (right).}
   \label{fig:hyperboloid}
\end{figure}
We recover the two expected hyperboloids.
Note that, as suggested by the analytical solution, one has $|\varphi_{h,y} | > 1$ in all of $\Omega$.

\subsection{Non axisymmetric surface}
The boundary conditions imposed will be those of a half cone.
The initial domain is $\Omega = (0, \pi) \times (0,1)$ and the imposed Dirichlet boundary condition is
\[\varphi_D(x,y) = \left(x\cos(y), x \sin(y), x \right)^\mathsf{T}. \]
We use a structured triangular mesh of size $h=0.041$ with $59,058$ dofs.
The resulting surface is presented in Figure \ref{fig:cone} and required 4 Newton iterations.
\begin{figure}[!htp]
\centering
	\subfloat{
   \includegraphics[scale=0.15,trim={0 0 110 0},clip]{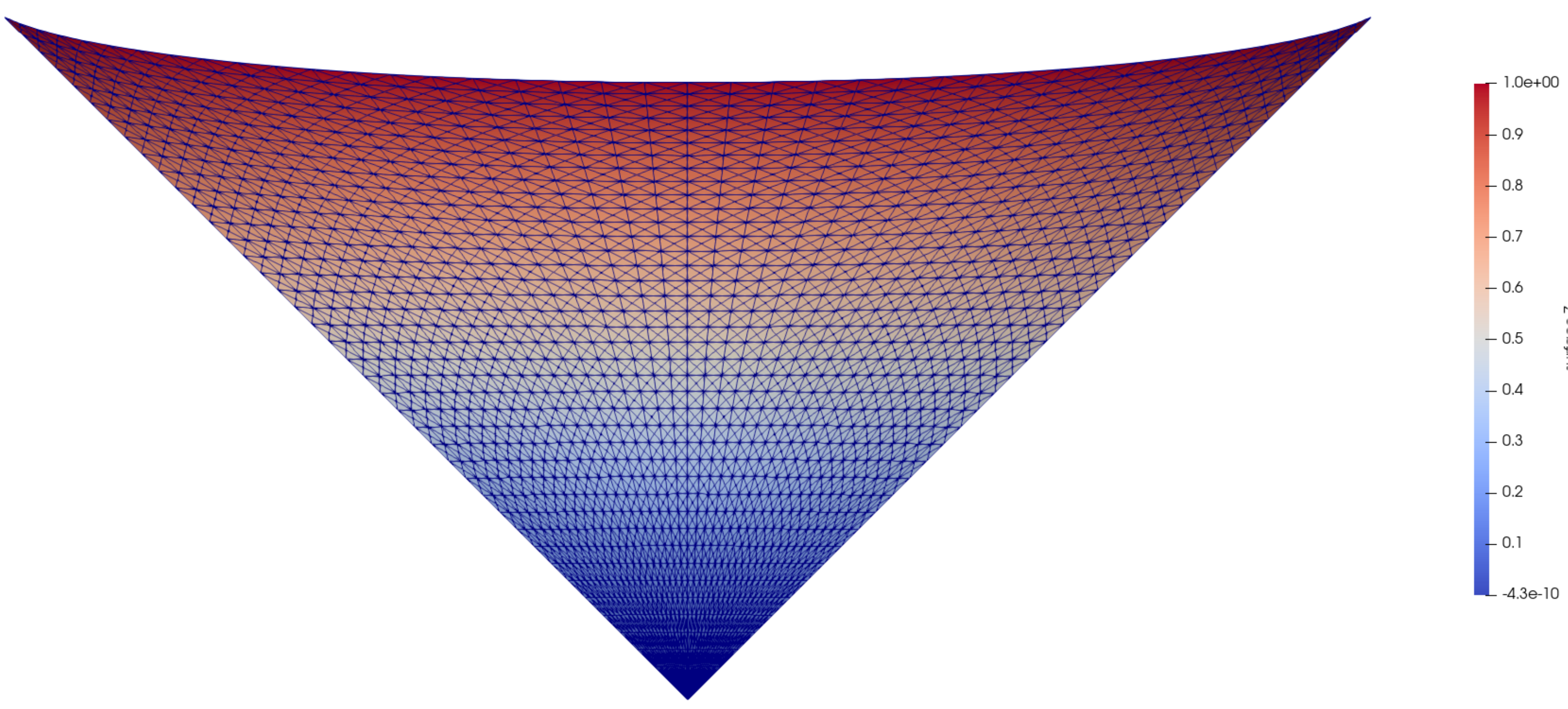}
   }
   \subfloat{
   \includegraphics[scale=0.15,trim={0 0 100 0},clip]{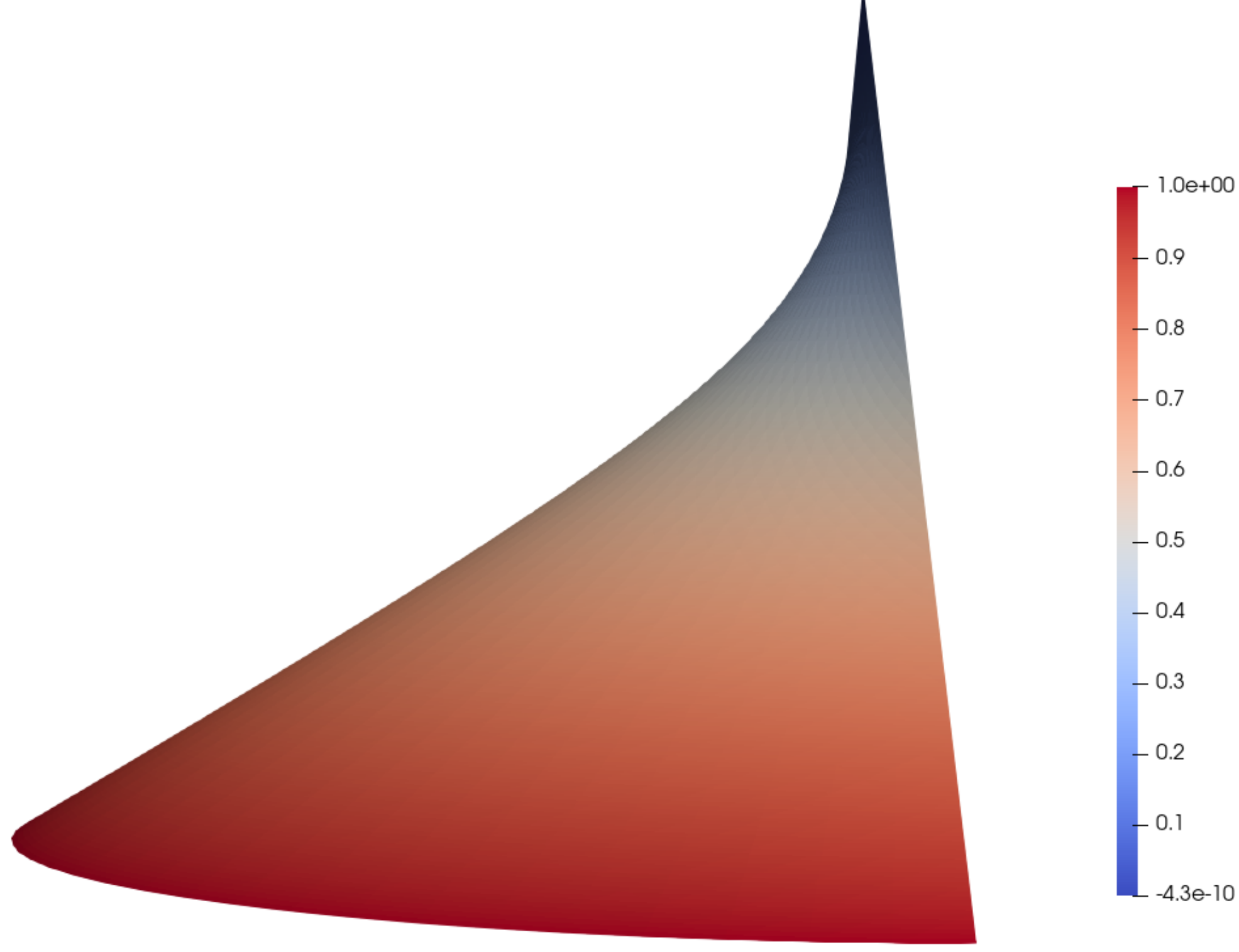}
   }   \caption{Non axisymmetric surface: computed surface}
   \label{fig:cone}
\end{figure}
Even though the left picture in Figure \ref{fig:cone} looks very much like a cone, the right picture does not.
Indeed towards the top of the surface, it tends to flatten.
Therefore, if we glue a reflexion of the surface, it will be continuous but not $\mathcal{C}^1$.
As we proved that the surface is at least $\mathcal{C}^1$, we cannot have a solution for a domain $\Omega$ larger in the $y$ component.
This is confirmed by the numerical method that stops converging for such domains.
Note that in this computation, one has $|\varphi_{h,y}| < 1$ in all of $\Omega$, as illuastred by Figure \ref{fig:cone grad_y}.
\begin{figure}[!htp]
\centering
\includegraphics[scale=0.2]{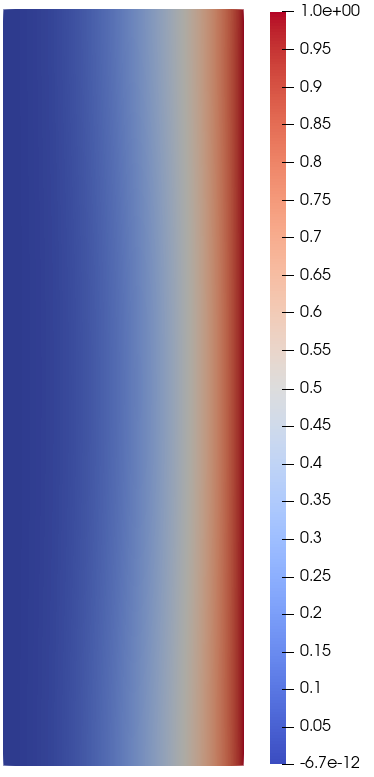}
\caption{Non axisymmetric surface surface: $|\varphi_{h,y}|$.}
\label{fig:cone grad_y}
\end{figure}
Therefore, the cone is a solution of $\bar{A}(\varphi)\varphi = 0$ and not of $A(\varphi)\varphi = 0$.
This is coherent with \cite{lebee2018fitting} which classified all the axysymmetric surfaces and did not contain a surface similar to Figure \ref{fig:cone}.

\subsection{Deformed hyperboloid}
This numerical test consists in deforming the hyperboloid of Section \ref{ex:hyper}.
The lower part of the cylinder stays unchanged whereas the upper part is slightly modified.
The domain is $\Omega = (0,L) \times (0,H)$, where $L=0.765$, $H=\frac{2\pi}\alpha$ and $\alpha=1.41$.
Periodicity is imposed on the lines of equations $y=0$ and $y=H$.
Therefore, Dirichlet boundary conditions are imposed only on the lines of equation $x=0$ and $x=L$ as
\[\left\{ \begin{aligned}
\varphi_{D1}(y) &= (R\cos(\alpha y), R\sin(\alpha y), -l)^\mathsf{T} \text{ on } x=0, \\
\varphi_{D2}(y) &= (R\cos(\alpha y), R\sin(\alpha y), R\sin(\beta)\cos(\alpha y))^\mathsf{T} \text{ on } x=L,
\end{aligned} \right. \]
where $l=1.08$, $R = 1.14$ and $\beta = \frac\pi4$.
Figure \ref{fig:weird} shows the computed surface for a structured mesh of size $h=0.222$ and containing $14,760$ dofs.
The computation requires 6 Newton iterations.
\begin{figure}[!htp]
\centering
	\subfloat{
   \includegraphics[scale=0.2]{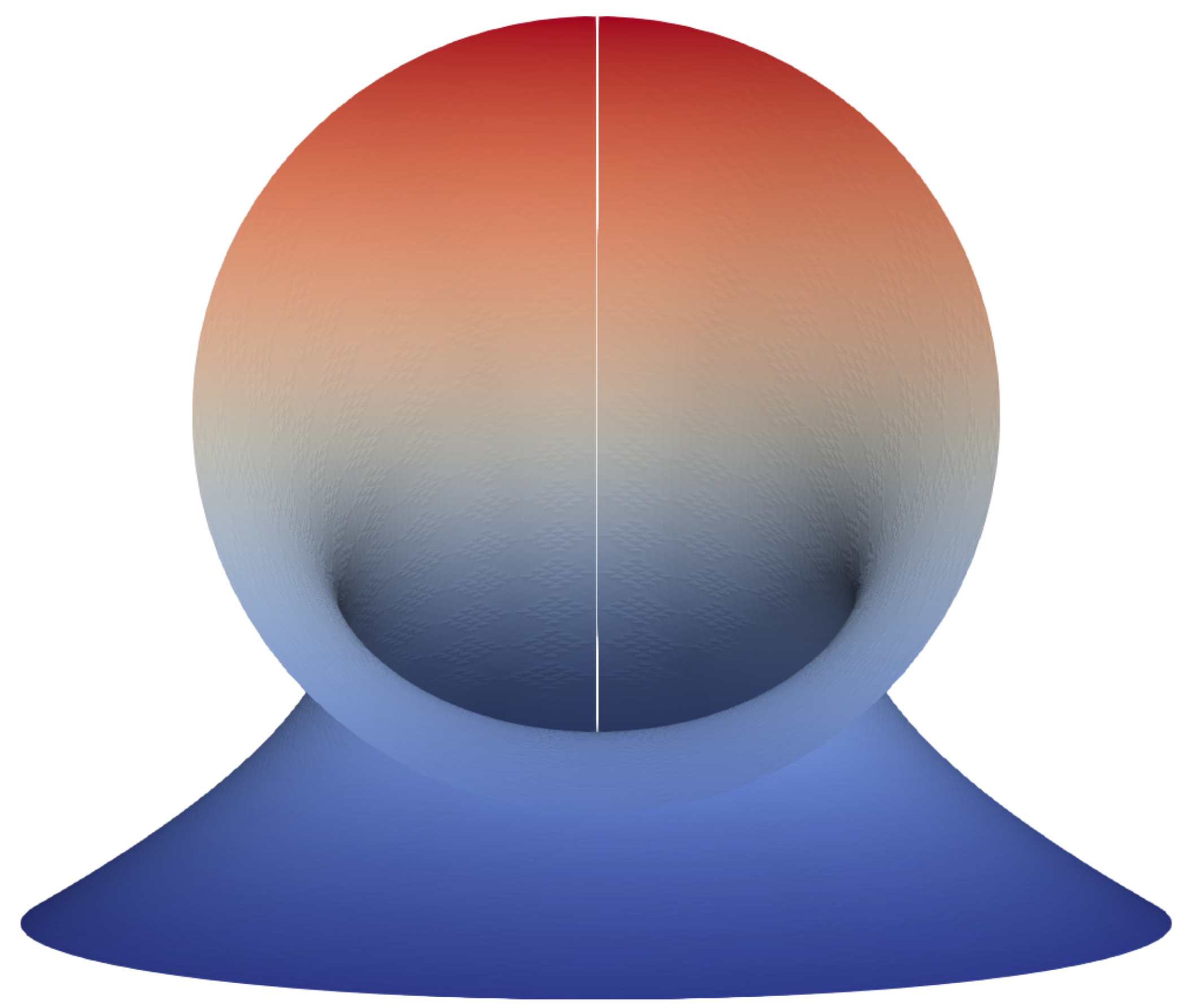}
   }
   \subfloat{
   \includegraphics[scale=0.2]{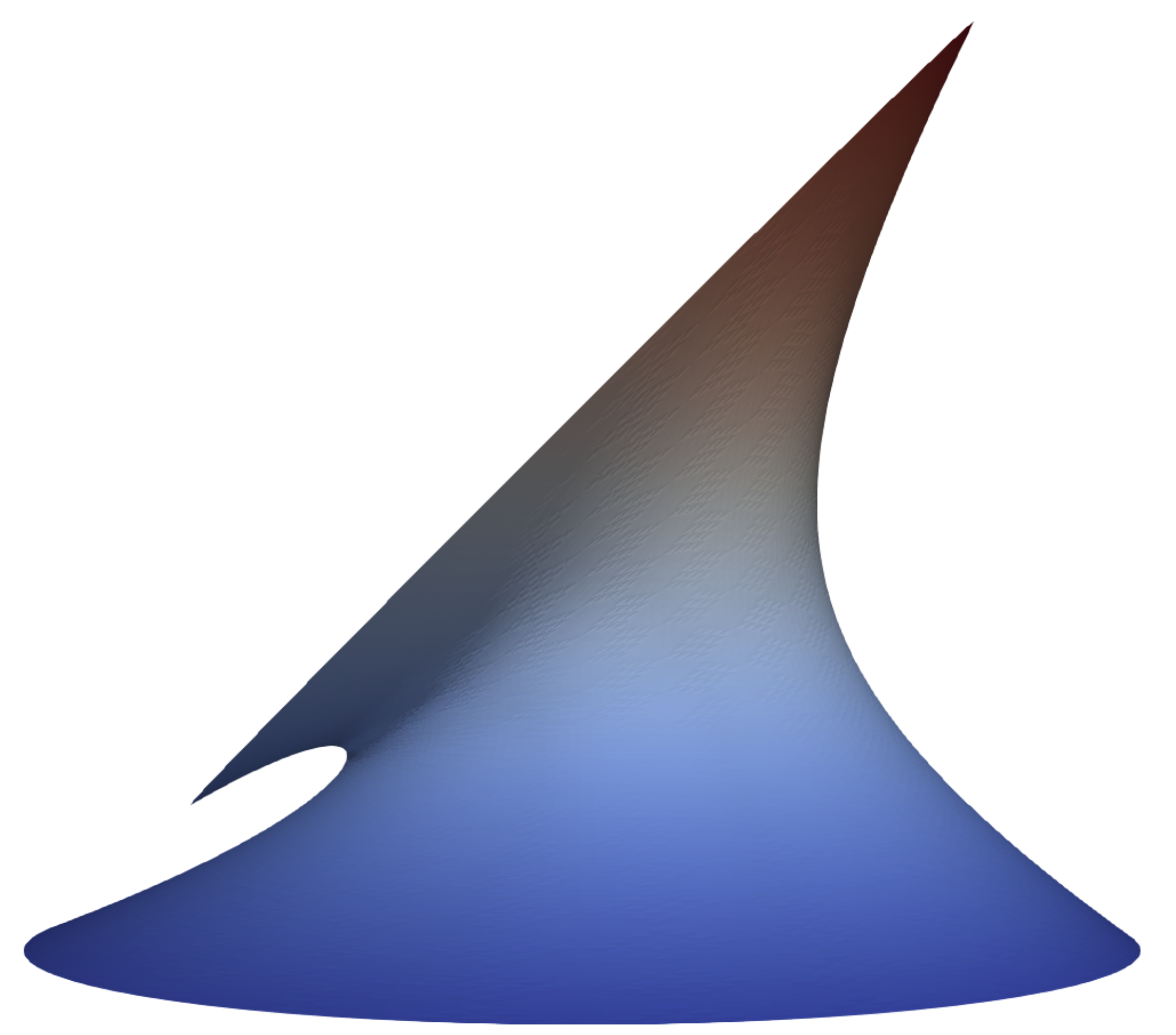}
   }
   \caption{Deformed hyperboloid: resulting shape.}
   \label{fig:weird}
\end{figure}
Note that, in all of $\Omega$, $|\varphi_{h,y}| > 1$, as illustrated by Figure \ref{fig:test grad_y}.
\begin{figure}[!htp]
\centering
\includegraphics[scale=0.21]{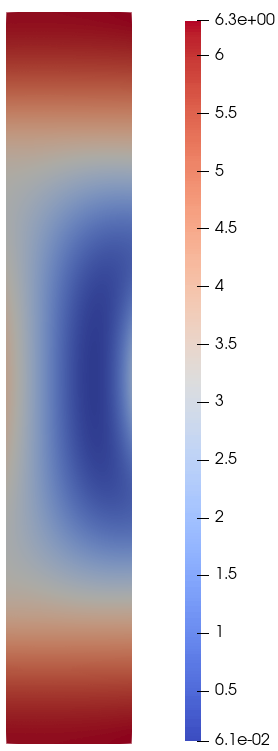}
\includegraphics[scale=0.2]{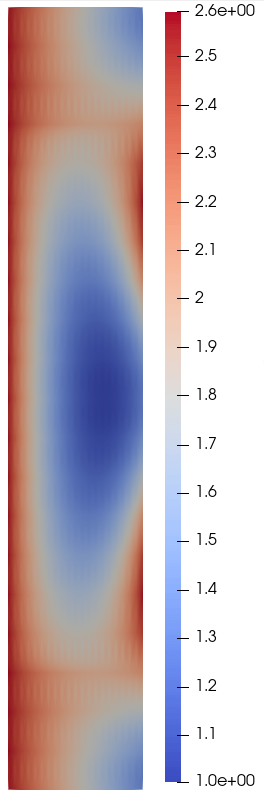}
\caption{Deformed hyperboloid: left: $|\varphi_{h,x}|^2$, right: $|\varphi_{h,y}|^2$.}
\label{fig:test grad_y}
\end{figure}
However, $\sup_\Omega | \varphi_{x,h}|^2 > 4$, and thus, $\varphi$ is a solution of $A(\varphi)\varphi = 0$ only in $\Omega' \subsetneq \Omega$.
Such a non-trivial Miura surface could not be computed with previous methods not relying on solving \eqref{eq:min surface eq}.

\section{Conclusion}
In this paper, it was proved that, under a few assumptions on the Dirichlet boundary condition, there exists of a unique solution to \eqref{eq:nonlinear operator eq}.
Subsequently, a numerical method, based on $H^2$-conforming finite elements, was presented and proved to converge at order one towards the solution of \eqref{eq:nonlinear operator eq}.
Finally, the convergence rate was validated on a numerical example and the method was used to compute a few non-trivial and non-analytical surfaces.

A question that remains unanswered is: ``Is it possible to compute a constrained solution of \eqref{eq:min surface eq} with the constraints from \cite{lebee2018fitting}".
We have seen, numerically, that the inequality constraint $|\varphi_y| > 1$ is not automatically verified.
Further investigations into the homogenization process that produced \eqref{eq:min surface eq} could prove valuable.
Investigations into how to build a Miura tessellation of a given size $\varepsilon > 0$ from a given Miura surface seem to be a natural next step. 

\section*{Code availability}
The code is available at \url{https://github.com/marazzaf/Miura_H_2.git}

\section*{Acknowledgments}
The author would like to thank A. Tarfulea (LSU) and S. Shipman (LSU) for stimulating discussions.

\section*{Funding}
This work is supported by the US National Science Foundation under grant number OIA-1946231 and the Louisiana Board of Regents for the Louisiana Materials Design Alliance (LAMDA).

\bibliographystyle{plain}
\bibliography{bib}

\end{document}